\theoremstyle{plain}
\newtheorem{theorem}{Theorem}[section]
\newtheorem{lemma}[theorem]{Lemma}
\newtheorem{cor}[theorem]{Corollary}
\newtheorem{prop}[theorem]{Proposition}
\newtheorem{remark}[theorem]{Remark}
\newtheorem{question}[theorem]{Question}
\newtheorem{prob}[theorem]{Problem}
\theoremstyle{definition}
\newtheorem{defn}[theorem]{Definition}
\newcommand{\C}{\mathbb{C}}
\newcommand{\R}{\mathbb{R}}
\newcommand{\Z}{\mathbb{Z}}
\newcommand{\Ker}{\mathit{Ker}}
\newcommand{\colim}{\mathit{colim}}
\numberwithin{equation}{section}
\begin{document}
\title{Inertia groups of high dimensional complex projective spaces}
\vspace{2cm}

\author{Samik Basu, Ramesh Kasilingam}

\email{samik.basu2@gmail.com  ; samik@rkmvu.ac.in  }
\address{Department of Mathematics,
Vivekananda University,
Belur, Howrah - 711202, West Bengal, India.}

\email{rameshkasilingam.iitb@gmail.com  ; mathsramesh1984@gmail.com  }
\address{Statistics and Mathematics Unit,
Indian Statistical Institute,
Bangalore Centre, Bangalore - 560059, Karnataka, India.}

\date{}
\subjclass [2010] {Primary : {57R60, 57R55; Secondary : 55P42, 55P25}}
\keywords{Complex projective spaces, smooth structures, inertia groups, concordance.}

\maketitle
\begin{abstract}
For a complex projective space the inertia group, the homotopy inertia group and the concordance inertia group are isomorphic. In complex dimension $4n+1$, these groups are related to computations in stable cohomotopy. Using stable homotopy theory, we make explicit computations to show that the inertia group is non-trivial in many cases. In complex dimension $9$, we deduce some results on geometric structures on homotopy complex projective spaces and complex hyperbolic manifolds.   
\end{abstract}

\section{Introduction}

The study of manifolds in differential topology presents itself through four different classes of equivalence : homotopy equivalence, homeomorphism, PL homeomorphism and diffeomorphism. The classification of manifolds upto these equivalences is a fundamental question in geometry and topology. 

One of the first results in this subject is the result of Milnor (\cite{Mil56}) that there exist smooth manifolds which are homeomorphic to $S^7$ but not diffeomorphic. The above result raises the possibility of non-diffeomorphic smooth structures for a given a topological manifold. We refer to these as inequivalent smoothings. For example, the $7$-sphere has $28$ inequivalent smoothings \footnote{This actually occurs as a consequence of the results in \cite{KM63}.}.

A smooth manifold homeomorphic to $S^m$ is known as a smooth homotopy $m$-sphere\footnote{We know that if $m\geq 5$ as an $m$-manifold homotopy equivalent to $S^m$ is actually homeomorphic to $S^m$.} . The existence of smooth homotopy $m$-spheres was studied in the amazing work of Kervaire and Milnor in \cite{KM63}. The set of diffeomorphism classes of smooth homotopy spheres $\Theta_m$ $(m\geq 5)$ forms a group under the operation of connected sum. It was shown that there exist exotic spheres in a vast majority of dimensions but also that in each dimension there are only finitely many. The proof of the result points to a curious connection to the stable homotopy groups of spheres (denoted by $\pi_n^s$ for $n\geq 0$) whose values are mysterious but are computable using algebraic techniques especially in low dimensions.

A possible way to change smooth structure on a smooth manifold $M^m$, without changing its homeomorphism type, is to take its connected sum $M^m\#\Sigma^m$ with a smooth homotopy sphere $\Sigma^m$. This induces a group action of $\Theta_m$ on the set of smooth structures on the topological manifold $M$. The collection of smooth homotopy spheres $\Sigma^m$  which admit a diffeomorphism $M^m\#\Sigma^m\to M^m$ forms a subgroup $I(M)$ of $\Theta_m$, called the inertia group of $M^m$. 

The calculation of $I(M)$ for an arbitrary manifold $M$ has proven to be a hard problem in general but there are results in certain cases. Tamura (\cite{Tam62}) constructed explicit non-trivial elements in the inertia group for certain $3$-sphere bundles over $S^4$. Examples with non-trivial inertia group are also constructed by Brown and Steer in \cite{BS65}. In every dimension $m$, it was proved that there exists a manifold $M^m$ such that $I(M^m)=\Theta_m$ (\cite{Win75}). In \cite{lev70}, certain non-trivial elements in the intertia group were constructed for many manifolds most notably for simply connected non-spin manifolds in dimensions $8n+2$. 

There are certain cases when the inertia group is $0$. It was proved by Schultz (\cite{Sch71})   that $I(S^p\times S^q)=0$ when $p+q\geq 5$. Kawakubo (\cite{Kaw68}) proved that $I(\mathbb{C}P^m)=0$ for $m\leq 8$. Limitations on the size of the inertia group have been given by Wall (\cite{Wal62}), Browder (\cite{Bro65}), Kosinski (\cite{Kos67}) and Novikov (\cite{Nov65}). There is no systematic approach for computing the inertia groups in general, and many problems are  open. In this paper we are interested in the problem : {\it What are the inertia groups of $\C P^m$ if $m\geq 9$?}

Analogous to the inertia group, for a manifold $M$ one may define the homotopy inertia group $I_h(M)$ and the concordance inertia group $I_c(M)$. $I_h(M)$ (respectively $I_c(M)$) consists of those $\Sigma \in I(M)$  for which the diffeomorphism $M\# \Sigma\cong M$ is homotopic (respectively concordant) to the identity. In \cite{Ram14} it was proved that for a complex projective space all these groups are the same. 

The concordance inertia groups may be understood using homotopy theory. In dimensions $8n+2$, there is a $\Z/2$ summand of $\pi_{8n+2}^s$ generated by $\mu_{8n+2}$ that corresponds to an exotic sphere $\Sigma^{8n+2}\in \Theta_{8n+2}$. In \cite{Far-Jon}, it was proved that $\C P^{4n+1} \# \Sigma^{8n+2}$ is not concordant to $\C P^{4n+1}$ using certain relations in $KO^*$, the cohomology theory induced by real $K$-theory. Hence this also implies that the element $\Sigma^{8n+2}$ does not lie in the inertia group.  

In this paper we try to compute the inertia groups of $\C P^{4n+1}$ using stable cohomotopy. We deduce that the inertia group $I(\C P^9)\cong \Z/2$ or $\Z/4$ (cf. Theorem \ref{inersumm}). This is the first example of a non-trivial inertia group among the inertia groups of complex projective spaces\footnote{The reader may observe that $\C P^{4n+1}$ is a spin manifold.}. We prove that the same phenomena holds also for $\C P^{13}$ by showing $I(\C P^{13}) \supseteq \Z/2$. The techniques involved are explicit calculations using Spanier-Whitehead duality and the knowledge of the stable homotopy groups of spheres in low dimensions at the prime $2$. 

We also make computations at odd primes $p$ making use of the non-triviality of certain elements in the stable homotopy groups proved in \cite{CNL96}. We deduce that under a liberal hypothesis on $n$ the inertia group of $\C P^{4n+1}$ contains non-trivial $p$-torsion. (For the precise statement see Theorem \ref{inersumm}.) It follows that over a large number of dimensions the inertia groups of complex projective spaces are non-trivial.

Computations of the inertia groups carry with them a number of geometric applications. Using previously known results on the triviality of the inertia group $I(\C P^m)$ for $m\leq 8$,  it is possible to classify, up to diffeomorphism, all closed smooth manifolds homeomorphic to the complex projective $n$-space where $n=3$ and $4$ (\cite{Ram15}). In \cite{Ram14}, it was shown that a way to generate exotic spheres which are not in the inertia group\footnote{These are defined to be Farrell-Jones spheres.} in dimensions $8n+2$ is by considering exotic spheres with a non-trivial $\alpha$-invariant\footnote{These are defined as Hitchin spheres.}. The exotic spheres outside the inertia group for complex projective spaces have consequences for complex hyperbolic manifolds by \cite{Far-Jon}. 

In this paper, we show that the inertia group of $\C P^9$ is a proper subgroup of $\Z/8\subset \Theta_{18}=\Z/2\oplus \Z/8$.   As an application, we give examples of two inequivalent smooth structures of $\mathbb{C}P^{9}$ such that one admits a metric of nonnegative scalar curvature and the other does not (cf. Theorem \ref{exocom}). Following this example, we construct examples of closed negatively curved Riemannian $18$-manifolds, which are homeomorphic but not diffeomorphic to complex hyperbolic manifolds (cf. Theorem \ref{exocomhyp}).\\

\noindent
{\bf Organisation of the paper:} In section 2, we introduce some preliminaries on the inertia group $I(\mathbb{C}P^m)$ and prove a result relating this to a computation in stable cohomotopy for dimension $m=4n+1$. In section 3, we make some computations in stable homotopy related to the above question and prove the non-triviality of $I(\mathbb{C}P^{4n+1})$ for the different values of $n$. Finally, section 4 contains some geometric applications.\\

\noindent
{\bf Notation: } Denote by $O=\colim_n O(n)$, $Top= \colim_n Top(n)$, $G=\colim_n G(n)$ the direct limit of the groups of orthogonal transformations, homeomorphisms, and homotopy equivalences respectively. In this paper all manifolds will be closed smooth, oriented and connected, and all homeomorphisms and diffeomorphisms are assumed to preserve orientation, unless otherwise stated.

\section{Inertia groups of Complex Projective Spaces}
In this section we recall some basic facts about inertia groups specializing to the case of complex projective spaces and provide the background of the arguments in the rest of the paper. We start by recalling some terminology from \cite{KM63}:
\begin{defn}
(a) A homotopy $m$-sphere $\Sigma^m$ is an oriented smooth closed manifold homotopy equivalent to the standard unit sphere $S^m$ in $\mathbb{R}^{m+1}$.\\
(b) A homotopy $m$-sphere $\Sigma^m$ is said to be exotic if it is not diffeomorphic to $S^m$.\\
(c) Two homotopy $m$-spheres $\Sigma^{m}_{1}$ and $\Sigma^{m}_{2}$ are said to be equivalent if there exists an orientation preserving diffeomorphism $f:\Sigma^{m}_{1}\to \Sigma^{m}_{2}$.
\end{defn}

The set of equivalence classes of homotopy $m$-spheres is denoted by $\Theta_m$. The equivalence class of $\Sigma^m$ is denoted by [$\Sigma^m$]. When $m\geq 5$, $\Theta_m$ forms an abelian group with group operation given by connected sum $\#$ and the zero element represented by the equivalence class  of $S^m$. M. Kervaire and J. Milnor \cite{KM63} showed that each  $\Theta_m$ is a finite group.

\begin{defn}
Let $M$ be a topological manifold. Let $(N,f)$ be a pair consisting of a smooth manifold $N$ together with a homeomorphism $f:N\to M$. Two such pairs $(N_{1},f_{1})$ and $(N_{2},f_{2})$ are concordant provided there exists a diffeomorphism $g:N_{1}\to N_{2}$ such that the composition $f_{2}\circ g$ is topologically concordant to $f_{1}$, i.e., there exists a homeomorphism $F: N_{1}\times [0,1]\to M\times [0,1]$ such that $F_{|N_{1}\times 0}=f_{1}$ and $F_{|N_{1}\times 1}=f_{2}\circ g$. The set of all such concordance classes is denoted by $\mathcal{C}(M)$.
\end{defn}

 Note that there is a homeomorphism $h: M^m\#\Sigma^m \to M^m$ $(m\geq5)$ which is the inclusion map outside of homotopy sphere $\Sigma^m$ and well defined up to topological concordance. We will denote the class of $(M^m\#\Sigma^m, h)$  in $\mathcal{C}(M)$  by $[M^m\#\Sigma^m]$. (Note that $[M^m\#S^m]$ is the class of $(M^m, id_{M^m})$.)\\

\begin{defn}
Let $M^m$ be a closed smooth $m$-dimensional manifold. The inertia group $I(M)\subset \Theta_{m}$ is defined as the set of $\Sigma \in \Theta_{m}$ for which there exists a diffeomorphism $\phi :M\to M\#\Sigma$.\\
The homotopy inertia group $I_h(M)$  is defined as the set of all $\Sigma\in I(M)$ such that there exists a diffeomorphism $M\to M\#\Sigma$ which is homotopic to $\rm{id}:M \to M\#\Sigma$.\\
The concordance inertia group $I_c(M)$ is defined as the set of all $\Sigma\in I_h(M)$ such that $M\#\Sigma$ is concordant to $M$.
\end{defn}

We recall the following theorem about complex projective spaces
\begin{theorem}{\rm \cite[Theorem 4.2]{Ram14}}\label{first}
For $n \geq 1$, $I_c(\mathbb{C}P^{n})=I_h(\mathbb{C}P^{n})=I(\mathbb{C}P^{n}).$
\end{theorem}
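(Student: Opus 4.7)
My plan is to establish the two non-trivial inclusions $I(\mathbb{C}P^n)\subseteq I_h(\mathbb{C}P^n)$ and $I_h(\mathbb{C}P^n)\subseteq I_c(\mathbb{C}P^n)$, since $I_c\subseteq I_h\subseteq I$ holds tautologically from the definitions. The two steps draw on very different input: the first is homotopy-theoretic rigidity of $\mathbb{C}P^n$, while the second is surgery-theoretic control of smooth concordance.

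For $I(\mathbb{C}P^n)\subseteq I_h(\mathbb{C}P^n)$: let $\phi:\mathbb{C}P^n\to\mathbb{C}P^n\#\Sigma$ be an orientation-preserving diffeomorphism witnessing $\Sigma\in I(\mathbb{C}P^n)$, and set $\psi=h\circ\phi$, where $h$ is the standard homeomorphism $\mathbb{C}P^n\#\Sigma\to\mathbb{C}P^n$. I would use the classical fact that $[\mathbb{C}P^n,\mathbb{C}P^n]\cong\mathbb{Z}$ via the action on $H^2$, so that any self-homotopy-equivalence of $\mathbb{C}P^n$ acts on $H^2(\mathbb{C}P^n;\mathbb{Z})$ as $\pm 1$, with the action on the top class $H^{2n}$ being the $n$-th power of this sign. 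When $n$ is odd, orientation-preservation of $\psi$ forces $+1$ on $H^2$, giving $\psi\simeq\mathrm{id}$ at once. When $n$ is even, complex conjugation $c:\mathbb{C}P^n\to\mathbb{C}P^n$ is an orientation-preserving diffeomorphism acting as $-1$ on $H^2$; so if $\psi$ happens to act as $-1$, replacing $\phi$ by $\phi\circ c$ produces an orientation-preserving diffeomorphism $\phi'$ with $h\circ\phi'=\psi\circ c$ acting as $+1$ on $H^2$, hence homotopic to the identity. In either case $\Sigma\in I_h(\mathbb{C}P^n)$.

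For $I_h(\mathbb{C}P^n)\subseteq I_c(\mathbb{C}P^n)$: I would work inside the concordance set $\mathcal{C}(\mathbb{C}P^n)$, which for $n\geq 3$ is in natural bijection with $[\mathbb{C}P^n,Top/O]$. The set of homotopy smoothings is a quotient of $\mathcal{C}(\mathbb{C}P^n)$ by the equivalence relation identifying $[N_1,f_1]$ and $[N_2,f_2]$ whenever there is a diffeomorphism $g:N_1\to N_2$ with $f_2\circ g$ merely homotopic (rather than concordant) to $f_1$. The hypothesis $\Sigma\in I_h$ says exactly that $[\mathbb{C}P^n\#\Sigma,h]$ and $[\mathbb{C}P^n,\mathrm{id}]$ coincide in this quotient, and the goal is to promote this to an equality in $\mathcal{C}(\mathbb{C}P^n)$ itself. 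The reduction I would aim for is to show that self-diffeomorphisms of $\mathbb{C}P^n$ which are homotopic to the identity act trivially on the concordance set.

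The main obstacle is precisely this last point. Unlike the first inclusion, no purely homotopical rigidity principle applies, and one must argue through the surgery exact sequence, comparing the effect of self-equivalences on $[\mathbb{C}P^n,G/O]$ and on $[\mathbb{C}P^n,Top/O]$. For $\mathbb{C}P^n$ one expects the even-dimensional cell structure, together with the known low-dimensional homotopy of $G/O$, $Top/O$ and $G/Top$, to kill the relevant obstructions and force the action in question to be trivial; but the book-keeping is considerably more delicate than the elementary sign-chasing that handled the first step.
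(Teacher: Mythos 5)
The first half of your proposal is fine: the inclusions $I_c\subseteq I_h\subseteq I$ are definitional, and your argument for $I(\mathbb{C}P^n)\subseteq I_h(\mathbb{C}P^n)$ — self-maps of $\mathbb{C}P^n$ are classified up to homotopy by their effect on $H^2$, orientation-preservation pins the sign down when $n$ is odd, and precomposition with complex conjugation (orientation-preserving exactly when $n$ is even) fixes the remaining case — is correct and is the standard argument. Note that the paper itself offers no proof of this theorem; it is imported from \cite{Ram14}, so the relevant comparison is with the argument there, whose key technical ingredient is in fact reproduced in this paper as Lemma \ref{lem1}.

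The inclusion $I_h(\mathbb{C}P^n)\subseteq I_c(\mathbb{C}P^n)$ is where your proposal has a genuine gap: you reduce it to the assertion that self-diffeomorphisms of $\mathbb{C}P^n$ homotopic to the identity act trivially on $\mathcal{C}(\mathbb{C}P^n)$ and then state only that "one expects" the obstructions to vanish. That assertion is the entire content of this inclusion, and as you have framed it (topological pseudo-isotopy of self-homeomorphisms) it is also harder than necessary. The way to close the gap is via normal invariants: the composite $\mathcal{C}(\mathbb{C}P^n)=[\mathbb{C}P^n,Top/O]\stackrel{\psi_*}{\to}[\mathbb{C}P^n,G/O]$ sends the concordance class of a homotopy smoothing to its surgery-theoretic normal invariant. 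If $\Sigma\in I_h(\mathbb{C}P^n)$, then $h:\mathbb{C}P^n\#\Sigma\to\mathbb{C}P^n$ is homotopic to the diffeomorphism $\phi^{-1}$, so the normal invariant of $(\mathbb{C}P^n\#\Sigma,h)$ vanishes, i.e.\ $\psi_*[\mathbb{C}P^n\#\Sigma]=0$. One then needs $\psi_*$ to be injective, which follows from $[\mathbb{C}P^n,\Omega(G/Top)]=0$; this in turn is proved by induction over the even cells of $\mathbb{C}P^n$ using $\pi_{2k+1}(G/Top)=L_{2k+1}(e)=0$. This is precisely Lemma \ref{lem1} of the present paper. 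Your intuition that the even-dimensional cell structure and the homotopy of $G/Top$ are what make this work is right, but without carrying out this step (or an equivalent one) the equality $I_h(\mathbb{C}P^n)=I_c(\mathbb{C}P^n)$ is not established.
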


Next we recall a reformulation of inertia groups via homotopy theory. Let $f_{M}:M^m\to S^m $  be a degree one map. Note that $f_{M}$ is well-defined up to homotopy. Composition with $f_{M}$ defines a homomorphism
$$f_{M}^*:[S^m, Top/O]\to [M^m ,Top/O],$$ 
and in terms of the identifications
\begin{center}
$\Theta_m=[S^m, Top/O]$ and $\mathcal{C}(M^m)=[M^m ,Top/O]$
\end{center}
given by \cite[p. 25 and 194]{KS77}, $f_{M}^*$ becomes $[\Sigma^m]\mapsto [M^m\#\Sigma^m]$. Therefore the concordance inertia group $I_c(M)$ can be identified with $\Ker(f_M^\ast)$.

Recall that the based homotopy classes $[X,G]$ can be identified with the $0^{th}$ stable cohomotopy group $\tilde{\pi}^0(X)$. We also write $f_{M}^*$ for the induced homomorphism $[S^m,G]=\tilde{\pi}^0(S^m)=\pi_m^s\to [M^m,G]=\tilde{\pi}^0(M^m)$ by a degree one map $f_{M}:M^m\to S^m$.

Now recollect some facts from smoothing theory \cite{Bru71a}. The natural  inclusions of $H$-spaces $O\subset Top \subset G$ induce $H$-space maps $\phi:G\to G/O$,  $\psi: Top/O\to G/O$ such that
$$\psi_{*}:\Theta_{8n+2}=\pi_{8n+2}(Top/O)\to \pi_{8n+2}(G/O)$$
is an isomorphism for $n\geq 1$. The homotopy groups of $G$ are the stable homotopy groups of spheres $\pi^{s}_m$ ; i.e., $\pi_{m}(G)=\pi^{s}_m$ for $m\geq 1$. For $n\geq 1$,
$$ \phi_{*}:\pi^{s}_{8n+2}\to \pi_{8n+2}(G/O)$$
is an isomorphism.

\begin{theorem}\label{coniner}
Let $M^{8n+2}$ be a closed smooth $8n+2$-manifold homotopy equivalent to $\mathbb{C}P^{4n+1}$ $(n\geq 1)$. Then $I_c(M)={\rm Ker}(\Phi)$, where $$\Phi: \Theta_{8n+2}\stackrel{\psi_{*}}{\rightarrow}\pi_{8n+2}(G/O)\stackrel{\phi_{*}^{-1}}{\rightarrow}\tilde{\pi}^0(S^{8n+2})\stackrel{f_{M}^*}{\longrightarrow}\tilde{\pi}^0(M^{8n+2}).$$
\end{theorem}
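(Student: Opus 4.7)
My plan is to transport the identification $I_c(M)=\Ker(f_M^*\colon[S^{8n+2},Top/O]\to[M,Top/O])$, noted just before the statement, across the isomorphisms $\psi_*$ and $\phi_*$. The $H$-space maps $\psi\colon Top/O\to G/O$ and $\phi\colon G\to G/O$ assemble into the commutative diagram
\[
\begin{CD}
\Theta_{8n+2} @>{\psi_*}>{\cong}> \pi_{8n+2}(G/O) @<{\phi_*}<{\cong}< \pi^s_{8n+2}\\
@V{f_M^*}VV @VV{f_M^*}V @VV{f_M^*}V\\
\mathcal{C}(M) @>{\psi_*}>> [M,G/O] @<{\phi_*}<< \tilde\pi^0(M),
\end{CD}
\]
whose top-row arrows are the stated isomorphisms, and $\Phi$ is the composite from the top-left to the bottom-right along the top row and then down the rightmost column. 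Commutativity of the two squares gives the single identity $\psi_*(f_M^*\Sigma)=\phi_*(\Phi(\Sigma))$ in $[M,G/O]$ for every $\Sigma\in\Theta_{8n+2}$; hence $\Sigma\in I_c(M)$ forces $\phi_*\Phi(\Sigma)=0$ and $\Sigma\in\Ker\Phi$ forces $\psi_*(f_M^*\Sigma)=0$.

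To upgrade these one-sided implications to the claimed equality, I must verify that $\psi_*\colon[M,Top/O]\to[M,G/O]$ is injective on elements of the form $f_M^*\Sigma$, and that $\phi_*\colon[M,G]\to[M,G/O]$ is injective on elements of the form $\Phi(\Sigma)$. My plan is to apply the cofibration $\C P^{4n}\hookrightarrow M\xrightarrow{f_M}S^{8n+2}$, where $\C P^{4n}$ is the $8n$-skeleton of $M\simeq\C P^{4n+1}$. For each $H$-space $X\in\{Top/O,G/O,G\}$ the associated Puppe sequence
\[
[\Sigma\C P^{4n},X]\xrightarrow{\tau^*}[S^{8n+2},X]\xrightarrow{f_M^*}[M,X]
\]
identifies $\Ker(f_M^*|_X)$ with $\mathrm{Im}(\tau^*_X)$. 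Naturality of this sequence under $\psi$ and $\phi$ then reduces both injectivity statements to showing that $\psi_*[\Sigma\C P^{4n},Top/O]$ and $\phi_*[\Sigma\C P^{4n},G]$ have the same image in $[S^{8n+2},G/O]$ after composing with $\tau^*$.

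The main obstacle is this last comparison. Since $\Sigma\C P^{4n}$ has cells only in odd dimensions $3,5,\dots,8n+1$, I would perform the comparison inductively up the cellular filtration, invoking the Kervaire-Milnor identifications among $\pi_k(Top/O)$, $\pi_k(G/O)$ and $\pi_k^s$ at these odd dimensions (controlled by Bott periodicity of $\pi_*(O)$ and the known low-dimensional values of $\pi_*(G/Top)$) to see that both images already coincide in $[\Sigma\C P^{4n},G/O]$, or at worst have the same image in $[S^{8n+2},G/O]$ under $\tau^*$. Having established both injectivities, the diagram chase of the first paragraph closes and gives $I_c(M)=\Ker\Phi$.
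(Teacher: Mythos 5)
Your first paragraph reproduces the paper's own argument: the same commutative diagram, the same identity $\psi_*(f_M^*\Sigma)=\phi_*(\Phi(\Sigma))$, and the same reduction to two injectivity statements, which are exactly the paper's Lemma \ref{lem1} and Lemma \ref{lem2}. The gap is that you never actually prove either of them, and the plan you sketch would not succeed for the $\phi_*$ statement. For $\psi_*$ your idea is salvageable: the relevant obstruction group is $[\Sigma\C P^{4n},G/Top]=[\C P^{4n},\Omega(G/Top)]$, and since $\pi_{2k+1}(G/Top)\cong L_{2k+1}(e)=0$ in \emph{every} odd dimension (this is the vanishing of odd surgery obstruction groups, not a ``low-dimensional'' fact), an induction over the odd cells of $\Sigma\C P^{4n}$ kills this group and makes $\psi_*$ surjective on $[\Sigma\C P^{4n},-]$; this is essentially how the paper proves Lemma \ref{lem1} via the fibration $Top/O\to G/O\to G/Top$, and it yields the inclusion $\Ker\Phi\subseteq I_c(M)$.

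For $\phi_*$ the situation is genuinely different, and this is where your argument breaks. The cokernel of $\phi_*\colon[\Sigma\C P^{4n},G]\to[\Sigma\C P^{4n},G/O]$ is controlled by the \emph{other} fibration $G\to G/O\to BO$, so the obstruction lives in $[\Sigma\C P^{4n},BO]\cong\widetilde{KO}^{-1}(\C P^{4n})$, which is not zero in general (the cells of $\C P^{4n}$ in dimensions divisible by $8$ contribute $\Z/2$'s). Hence the images of $\psi_*$ and $\phi_*$ do \emph{not} coincide in $[\Sigma\C P^{4n},G/O]$, and your fallback claim --- that they at least have the same image in $\pi_{8n+2}(G/O)$ after applying $\tau^*$ --- is precisely the nontrivial content you would need to supply; it cannot be extracted from Bott periodicity and $\pi_*(G/Top)$, which see only the first fibration. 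The paper closes this gap by quoting Brumfiel's theorem that $\phi_*\colon[\C P^m,G]\to[\C P^m,G/O]$ is monic for all $m$ (a substantive result resting on $e$-invariant and $J$-homomorphism computations), transported to $M$ along the homotopy equivalence $M\simeq\C P^{4n+1}$. Without this input or an equivalent computation, you obtain only $\Ker\Phi\subseteq I_c(M)$ and not the reverse inclusion $I_c(M)\subseteq\Ker\Phi$.
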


The proof of Theorem \ref{coniner} requires two facts we prove below
\begin{lemma}\label{lem1}
Let $M^{2m}$ be a closed smooth $2m$-manifold homotopy equivalent to $\mathbb{C}P^{m}$ $(m\geq 1)$. Then the homomorphism $\psi_{*}:[M^{2m},Top/O]\mapsto [M^{2m}, G/O]$ is monic.
\end{lemma}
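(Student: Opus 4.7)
The plan is to use the fibre sequence $Top/O \xrightarrow{\psi} G/O \to G/Top$ arising from the inclusions $O \subset Top \subset G$. Its Puppe extension gives an exact sequence of abelian groups
\[
[M,\Omega(G/Top)] \xrightarrow{\partial} [M,Top/O] \xrightarrow{\psi_\ast} [M,G/O],
\]
so $\ker \psi_\ast = \mathrm{im}(\partial)$. Hence injectivity of $\psi_\ast$ will follow once I show that $[M,\Omega(G/Top)] = 0$.

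The key input is the classical surgery identification $\pi_i(G/Top) \cong L_i(\Z)$, which equals $\Z, 0, \Z/2, 0$ in dimensions $0,1,2,3 \pmod 4$. In particular $\pi_i(G/Top) = 0$ for every odd $i$, so the loop space $\Omega(G/Top)$ has homotopy groups concentrated in odd degrees. On the other hand, since $M \simeq \C P^m$, we have $H^i(M;A) = 0$ for every odd $i$ and every abelian group $A$.

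I would then combine these two facts in a Postnikov-tower induction on $Y = \Omega(G/Top)$. Let $Y_n$ denote the $n$-th Postnikov section; the principal fibration $K(\pi_n(Y), n) \to Y_n \to Y_{n-1}$ produces an exact sequence $[M,K(\pi_n(Y), n)] \to [M,Y_n] \to [M,Y_{n-1}]$. When $n$ is even, $\pi_n(Y) = 0$ and $Y_n = Y_{n-1}$. When $n$ is odd, $[M,K(\pi_n(Y), n)] \cong H^n(M;\pi_n(Y)) = 0$ by the cohomological vanishing above, so $[M,Y_n] = 0$ by induction. Taking the inverse limit along the tower (which converges because $M$ is a finite CW complex) yields $[M,Y] = 0$, completing the proof.

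The step I expect to be the main obstacle is organising the Postnikov bookkeeping cleanly, particularly in the low-dimensional range where the surgery identification $\pi_i(G/Top) \cong L_i(\Z)$ is most delicate; however every obstruction one encounters lives in an odd-degree cohomology group of $\C P^m$ and therefore automatically vanishes.
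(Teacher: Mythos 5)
Your proposal is correct, and it follows the paper's first step exactly: both arguments use the fibration $Top/O\to G/O\to G/Top$ to reduce injectivity of $\psi_*$ to the vanishing of $[M^{2m},\Omega(G/Top)]$. Where you diverge is in how that vanishing is established. The paper filters the \emph{source}: it runs a Barratt--Puppe induction over the skeleta $\C P^{k-1}\hookrightarrow \C P^{k}$, at each stage quoting $[S^{2k},\Omega(G/Top)]=[S^{2k+1},G/Top]=L_{2k+1}(e)=0$. You instead filter the \emph{target}: you use the full identification $\pi_i(G/Top)\cong L_i$ to see that $\Omega(G/Top)$ has homotopy concentrated in odd degrees, and then climb its (principal, since $\Omega(G/Top)$ is an $H$-space) Postnikov tower, killing each obstruction group $H^{n}(M;\pi_n\Omega(G/Top))$ because $M\simeq \C P^m$ has no odd cohomology. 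The two inductions are dual obstruction-theoretic bookkeepings resting on the same pair of facts --- $G/Top$ has only even homotopy and $\C P^m$ has only even cells --- so neither buys more generality here; the paper's version is marginally more economical in that it only invokes the vanishing of the odd $L$-groups rather than the computation of all of $\pi_*(G/Top)$, while yours makes the structural reason for the vanishing (even versus odd degree) more transparent and would apply verbatim to any finite complex with cohomology concentrated in even degrees.
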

\begin{proof}
Consider the Barratt-Puppe sequence for the inclusion $i:\mathbb{C} P^{m-1}\hookrightarrow \mathbb{C} P^{m}$ which induces the exact sequence on taking homotopy classes $[-,\Omega(G/Top)]$
$$\cdots \to [S\mathbb{C} P^{m-1}, \Omega(G/Top)]{\to}[S^{2m},\Omega(G/Top)] \stackrel{f^{*}_{\mathbb{C} P^{m}}}{\to} [\mathbb{C} P^{m}, \Omega(G/Top)] \stackrel{i^{*}}{\to}[\mathbb{C} P^{m-1}, \Omega(G/Top)]\cdots $$
and by identifying 
$$[S^{2m}, \Omega(G/Top)]=[S^{2m+1}, G/Top]=L_{2m+1}(e)=0,$$ 
where $L_{k}(e)$ is the simply connected surgery obstruction group, and $[\mathbb{C} P^{1}, \Omega(G/Top)]=0$, we can prove that $[\mathbb{C}P^{m}, \Omega(G/Top)]=0$ $(\forall~ m)$. Now consider the following long exact sequence associated to the fibration $Top/O\to G/O\to G/Top:$ 
$$\cdots \to [M^{2m},\Omega(G/Top)]\to [M^{2m},Top/O]\stackrel{\psi_{*}}\to [M^{2m},G/O] \to [M^{2m},G/Top] $$ 
and using the fact that $[M^{2m},\Omega(G/Top)]=[\mathbb{C}P^m,\Omega(G/Top)]=0$, we have that the homomorphism $\psi_{*}$ is monic.
\end{proof}
\begin{lemma}\label{lem2}
Let $M^{2m}$ be a closed smooth $2m$-manifold homotopy equivalent to $\mathbb{C}P^{m}$ $(m\geq 1)$. Then the homomorphism $\phi_{*}:[M^{2m},G]\to [M^{2m},G/O]$ is monic.
\end{lemma}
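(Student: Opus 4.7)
The plan is to mimic the argument of Lemma \ref{lem1}. Consider the fibration $O \to G \to G/O$, which on applying $[M^{2m}, -]$ yields the long exact sequence
$$\cdots \to [M^{2m}, O] \xrightarrow{j_*} [M^{2m}, G] \xrightarrow{\phi_*} [M^{2m}, G/O] \to \cdots$$
By exactness, $\phi_*$ is monic if and only if $j_*$ is the zero homomorphism.

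The inclusion $O \subset G$ factors as $O \subset Top \subset G$, so $j_*$ factors as $[M^{2m}, O] \to [M^{2m}, Top] \xrightarrow{\iota_*} [M^{2m}, G]$. The vanishing $[\mathbb{C}P^{m}, \Omega(G/Top)] = 0$ established inside the proof of Lemma \ref{lem1} implies, via the long exact sequence for the fibration $Top \to G \to G/Top$, that $\iota_*$ is injective. Thus it suffices to show $[M^{2m}, O] \to [M^{2m}, Top]$ is the zero map, which by exactness of $[M^{2m}, O] \to [M^{2m}, Top] \to [M^{2m}, Top/O]$ from the fibration $O \to Top \to Top/O$ is equivalent to the injectivity of $p_* \colon [M^{2m}, Top] \to [M^{2m}, Top/O]$.

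To verify this injectivity, I would induct on $m$ using the Barratt-Puppe cofiber sequence of the cellular inclusion $\mathbb{C}P^{m-1} \hookrightarrow \mathbb{C}P^{m}$, whose cofiber is $S^{2m}$. The base case $m = 1$ is $M \simeq S^2$, where $[S^2, O] = \pi_2(O) = 0$ makes the statement trivial. For the inductive step, a diagram chase exploits the commutative square coming from the map of fibrations $(O \to Top \to Top/O) \to (O \to G \to G/O)$ together with the inductive hypothesis and Lemma \ref{lem1} (monicity of $\psi_* \colon [M^{2m}, Top/O] \to [M^{2m}, G/O]$) to reduce to analyzing the top-cell contribution.

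The main obstacle is this last step, namely controlling the map $[S^{2m}, O] \to [S^{2m}, Top]$ on the top cell. The classical $J$-homomorphism $\pi_{2m}(O) \to \pi_{2m}(Top)$ can be nonzero (e.g., in dimensions $2m \equiv 0 \pmod 8$), so the argument must leverage the even-dimensional cell structure of $\mathbb{C}P^{m}$ together with the vanishing of $[\mathbb{C}P^{m}, \Omega(G/Top)]$ from Lemma \ref{lem1} to kill these top-cell contributions after attaching the cells of $\mathbb{C}P^{m}$.
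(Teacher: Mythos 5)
There is a genuine gap, and it sits exactly where you flag ``the main obstacle.'' Your chain of reductions is correct but does not make progress: by exactness of $[M,O]\to[M,G]\to[M,G/O]$ the lemma is equivalent to $j_*=0$; since $[M,Top]\to[M,G]$ is injective (your step using $[\mathbb{C}P^m,\Omega(G/Top)]=0$ is fine), this is equivalent to $[M,O]\to[M,Top]$ being zero, which is in turn \emph{tautologically} equivalent to injectivity of $[M,Top]\to[M,Top/O]$. So after all the reformulations you are left with the original statement in disguise, and the cell-by-cell induction you propose must then confront the fact that the $J$-homomorphism $\pi_{4k}(O)=\mathbb{Z}\to\pi_{4k}(G)=\pi_{4k}^s$ is genuinely nonzero on every $4k$-cell of $\mathbb{C}P^m$. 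Showing that these contributions die after assembling the cells of $\mathbb{C}P^m$ is not a soft diagram chase: it requires the $e$-invariant/$J(X)$ computations of Adams and Brumfiel, and your sketch gives no mechanism for it. The appeal to Lemma \ref{lem1} does not close the loop either: monicity of $\psi_*:[M,Top/O]\to[M,G/O]$ together with the commuting square of fibrations only lets you compare kernels, and the injectivity you would need on the bottom row of that square is precisely the statement of Lemma \ref{lem2}.

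The paper's proof is much shorter and avoids all of this: since $[-,G]$ and $[-,G/O]$ are homotopy functors, a homotopy equivalence $g:M^{2m}\to\mathbb{C}P^m$ identifies $\phi_*$ for $M^{2m}$ with $\phi_*$ for $\mathbb{C}P^m$, and the latter is monic by a theorem of Brumfiel. If you want a complete argument along your lines, you would essentially have to reprove Brumfiel's theorem; otherwise the correct move is to reduce to $\mathbb{C}P^m$ and cite it.
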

\begin{proof}
Since $M^{2m}$ is homotopy equivalent to $\mathbb{C}P^{m}$, let $g:M^{2m} \to \mathbb{C}P^{m}$ be a homotopy equivalence. Therefore the induced map $g^{*}:[\mathbb{C}P^m,-]\to [M^{2m},-]$ fits into the following commutative diagram:
$$
\begin{CD}
[\mathbb{C}P^{m},G]@>\phi_{*}>>[\mathbb{C}P^{m},G/O]\\
@V\cong Vg^{*}V      @V\cong Vg^{*}V\\
[M^{2m},G]  @>\phi_{*}>> [M^{2m},G/O]
\end{CD}
$$
Since Brumfiel \cite[p.77]{Bru71} has shown that $$\phi_{*}:[\mathbb{C}P^{m},G]\to [\mathbb{C}P^{m},G/O]$$ is monic for all 
$m\geq 1$. This implies that the homomorphism $\phi_{*}:[M^{2m},G]\to [M^{2m},G/O]$ is monic. 
\end{proof}
\paragraph{\it Proof of Theorem \ref{coniner}:}
Consider the following commutative of diagram :
$$
\begin{CD}
[S^{2m},Top/O]=\Theta_{2m}@>f^*_{M^{2m}}>> [M^{2m},Top/O]=\mathcal{C}(M^{2m})\\
@VV\psi_{*}V                      @VV\psi_{*}V\\
[S^{2m},G/O]  @>f^*_{M^{2m}}>> [M^{2m},G/O]\\
@AA\phi_{*}A                      @AA\phi_{*}A\\
[S^{2m},G]=\pi_{2m}^{s} @>f^*_{M^{2m}}>>  [M^{2m},G]
\end{CD}
$$
Recall that the concordance class $[M^{2m}\#\Sigma]\in [M^{2m},Top/O]$ of $M^{2m}\#\Sigma$ is $f^{*}_{M^{2m}}([\Sigma])$ when $m > 2$, and that $[M^{2m}]=[M^{2m}\#S^{2m}]$ is the zero element of this group. Now Lemma \ref{lem1} and Lemma \ref{lem2}  used in conjunction with a simple diagram chase for $m=4n+1$ show that $I_c(M^{8n+2})={\rm Ker}(\Phi=f_{M}^*\circ \phi_{*}^{-1}\circ \psi_{*})$ thus proving Theorem \ref{coniner}. \qed

Identifying the group $\Theta_{8n+2}$ with $\tilde{\pi}^0(S^{8n+2})$ in view of Theorem \ref{coniner} we consider the following question related to inertia groups.
\begin{prob}\label{main}
What is the kernel of $f_{\C P^{4n+1}}^\ast : \tilde{\pi}^0(S^{8n+2}) \to \tilde{\pi}^0(\C P^{4n+1})$ ?
\end{prob}
We explore some cases of this question in the next section.

\section{Computations in stable homotopy}
In this section we make computations relating to Problem \ref{main}. We work in the category of spectra. Throughout this section we use the notation $X$ for both the space and the spectrum $\Sigma^\infty X$ and the notation $\{X,Y\}$ for the stable homotopy classes of maps from $X$ to $Y$. 

Recall that (\cite{EKMM}, \cite{sym}) there are  models of the category of spectra which are a closed symmetric monoidal category with monoidal structure given by the smash product $\wedge$ and the mapping spectrum denoted by $F(-,-)$. The sphere spectrum $S^0$ is the unit of the monoidal structure. In this category for a spectrum $X$ one may form the dual spectrum $DX = F(X,S^0)$. This notion appeared earlier for finite spectra as the Spanier-Whitehead dual of $X$ (\cite{Ada1}). 

One notes that if $X$ is a finite cellular spectrum then so is the dual $DX$. We briefly recall this construction. Let the cellular structure on $X$ be given by $X=\mathit{colim}_n X^{(n)}$ such that $X^{(n)}$ is obtained from $X^{(n-1)}$ by attaching a cell of dimension $a_n$. That is, there are cofibre sequences 
$$S^{a_n-1} \to X^{(n-1)} \to X^{(n)}.$$ 
The dual structure on $DX$ is given by $DX= \mathit{colim}_n D(X/X^{(n)})$ with cofibre sequences $$S^{-a_n-1} \to D(X/X^{(n)}) \to D(X/X^{(n-1)}) $$
obtained by dualizing the cofibre 
$$S^{a_n}\to X/X^{(n-1)}\to X/X^{(n)} \to S^{a_n+1}$$
Thus, in the colimit   $DX= \mathit{colim}_n D(X/X^{(n)})$, $D(X/X^{(n-1)})$ is obtained from $D(X/X^{(n)})$ by attaching a cell of dimension $-a_n$. Therefore, $DX$ is also a finite cellular spectrum with a $-n$-cells for every $n$-cell of $X$. 

Note that $\tilde{\pi}^0(X) \cong \{X,S^0\}$. Note also that $\{X,S^0\} = \pi_0 F(X,S^0) = \pi_0 D(X)$. Therefore for a map $f: X\to Y$ of spectra the map $\tilde{\pi}^0(f): \tilde{\pi}^0 (Y) \to \tilde{\pi}^0(X)$ is equivalent to the map $\pi_0(D(f)): \pi_0D(Y) \to \pi_0 D(X)$ induced by the dual map $D(f): D(Y) \to D(X)$. 

For Problem \ref{main} we wish to compute $\tilde{\pi}^0(f)$ where $f: \C P^{4n+1} \to S^{8n+2}$ is the usual degree $1$ map. Our approach is to compute $\pi_0(D(f))$. 

\subsection{Computations in dimension 18}\label{comp18}
We begin by noting that $D\C P^9$ has the filtration 
\begin{equation}\label{filt}
S^{-18}= D(\C P^9/\C P^8) \to D(\C P^9/\C P^7) \cdots D(\C P^9/\C P^0)= D(\C P^9)
\end{equation}
and there are cofibre sequences for $1\leq k \leq 8$
$$S^{-2k-1} \to D(\C P^9/\C P ^{k}) \to D(\C P^9/ \C P^{k-1}) \to S^{-2k}$$
The degree $1$ map $\C P^9 \to S^{18}$ dualises to the inclusion of the bottom cell $S^{-18} \to D(\C P^9)$. Thus we are interested in the question : which elements of $\pi_{18}^s = \pi_0 S^{-18}$ map to $0$ under the map above? Recall that the homotopy group $\pi_0 S^{-18} = \pi_{18}^s=\Z/2\oplus \Z/8$. 
The $\Z/2$ summand is generated by the element $\mu_{18}$ (\cite{Ada}). From \cite{Far-Jon} it follows that this element maps non trivially into $\pi_0 D\C P^9$. Therefore the question remains about the other summand $\Z/8$. In this computation we need formulas for the action of the Steenrod operations on the cohomology of $\C P^n$. We recall them below
\begin{prop} \label{St}
In $H^*(\C P^n;\Z/2)$ we have the formula 
$$Sq^2(x^k)=x^{k+1} \iff  k \mbox{ is odd}.$$
$$Sq^4(x^k)=x^{k+2} \iff k \equiv 2~\mathit{or}~3~ (\mathit{mod}~4)$$
\end{prop}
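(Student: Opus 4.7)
The plan is to derive both formulas as special cases of the well-known identity $Sq^{2i}(x^k) = \binom{k}{i}\, x^{k+i} \pmod{2}$ and then read off the parity conditions on the binomial coefficient.

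First I would record the basic facts. Since $H^*(\mathbb{C}P^n;\mathbb{Z}/2) = \mathbb{Z}/2[x]/(x^{n+1})$ with $|x|=2$, the general properties of the Steenrod squares (namely $Sq^0=\mathrm{id}$, $Sq^i$ vanishes on classes of degree $<i$, and $Sq^i(y)=y^2$ when $|y|=i$) force $Sq^0(x)=x$, $Sq^1(x)=0$, $Sq^2(x)=x^2$, and $Sq^i(x)=0$ for $i\geq 3$. Equivalently, the total Steenrod square satisfies $Sq(x)=x+x^2$.

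Next I would use the Cartan formula, which says that the total operation $Sq$ is a ring homomorphism. Applying it to $x^k$ gives
\begin{equation*}
Sq(x^k) \;=\; (Sq\, x)^k \;=\; (x+x^2)^k \;=\; x^k(1+x)^k \;=\; \sum_{i=0}^{k} \binom{k}{i} x^{k+i}.
\end{equation*}
Reading off the graded pieces yields $Sq^{2i+1}(x^k)=0$ for all $i$, and
\begin{equation*}
Sq^{2i}(x^k) \;=\; \binom{k}{i} \, x^{k+i} \pmod{2},
\end{equation*}
which is the master formula.

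Specializing to $i=1$ gives $Sq^2(x^k)=k\cdot x^{k+1}$, which equals $x^{k+1}$ precisely when $k$ is odd. Specializing to $i=2$ gives $Sq^4(x^k)=\binom{k}{2} x^{k+2}$, and I would finish with a short case analysis on $k$ modulo $4$: writing $\binom{k}{2}=k(k-1)/2$, the four residue classes yield $\binom{k}{2}\equiv 0,0,1,1 \pmod 2$ for $k\equiv 0,1,2,3 \pmod 4$ respectively, so the coefficient is nonzero exactly when $k\equiv 2$ or $3\pmod 4$. There is no genuine obstacle here—the entire argument is a direct computation—so the only thing to be careful about is the bookkeeping in the final mod-$4$ case analysis.
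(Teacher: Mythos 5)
Your proof is correct. The paper states this proposition without proof (it merely ``recalls'' these standard formulas), and your argument --- computing $Sq(x^k)=(x+x^2)^k=\sum_i\binom{k}{i}x^{k+i}$ via the Cartan formula and checking the parity of $\binom{k}{1}$ and $\binom{k}{2}$ --- is exactly the standard derivation one would supply.
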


Recall that the Steenrod operation $Sq^2$ detects the Hopf map $\eta$ which in our notation is $h_1$ and $Sq^4$ detects the map $\nu$ which in our notation is $h_2$ (modulo $2h_2$). We prove the Proposition
\begin{prop}\label{inj18}
The map $\pi_{18}^s = \pi_0 S^{-18} \to \pi_0 (D (\C P^9/ \C P^3))$ is injective. 
\end{prop}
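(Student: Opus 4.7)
The plan is to work along the cellular filtration \eqref{filt}, restricted to
\[ S^{-18} = Y_0 \to Y_1 \to Y_2 \to Y_3 \to Y_4 \to Y_5 = D(\C P^9/\C P^3), \]
with $Y_k = D(\C P^9/\C P^{8-k})$, and prove by induction on $k$ that $\pi_0 Y_0 \to \pi_0 Y_k$ is injective. The cofibre sequence $S^{2k-19} \xrightarrow{\alpha_k} Y_{k-1} \to Y_k$ at each stage yields a long exact sequence identifying $\ker(\pi_0 Y_{k-1} \to \pi_0 Y_k)$ with the image of $\pi_{19-2k}^s$ under composition with $\alpha_k$, so the inductive step reduces to showing that this image does not meet the image of $\pi_{18}^s = \pi_0 Y_0$ in $\pi_0 Y_{k-1}$ except at $0$.

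To identify the primary components of each $\alpha_k$ via Spanier--Whitehead duality, I would use Proposition \ref{St}: the operation $Sq^2$ detects $\eta$-type components of attaching maps and $Sq^4$ detects $\nu$-type components. The base case is transparent: since $Sq^2(x^8)=0$, the map $\alpha_1:S^{-17}\to S^{-18}$ must be stably null, whence $Y_1\simeq S^{-18}\vee S^{-16}$ and $\pi_0 Y_0$ sits as a summand in $\pi_0 Y_1=\pi_{18}^s\oplus\pi_{16}^s$. At $k=2$, the non-vanishing of $Sq^2(x^7)$ and $Sq^4(x^7)$ forces $\alpha_2:S^{-15}\to S^{-18}\vee S^{-16}$ to be essentially $(\nu,\eta)$ modulo higher-filtration corrections; analogous mixtures describe $\alpha_3,\alpha_4,\alpha_5$ from the remaining entries of Proposition \ref{St}. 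The inductive step then translates into a concrete multiplicative problem: if $\beta\in\pi_{19-2k}^s$ satisfies $\alpha_k\circ\beta=\iota_\ast(x)$ for some $x\in\pi_{18}^s$, where $\iota:Y_0\hookrightarrow Y_{k-1}$, the ``higher-cell'' components of $\alpha_k\circ\beta$ must vanish (forcing relations such as $\beta\eta=0$ and $\beta\nu=0$ in the relevant $\pi_\ast^s$), and then the bottom-cell component expresses $x$ as an iterated $\nu$-multiple of $\beta$. Standard multiplication tables in $\pi_\ast^s$ for $\ast\le 18$, at the prime $2$ and (for $k=3$) also at the prime $3$, force $x=0$.

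The main obstacle is that the $\alpha_k$ are not purely $\eta$- and $\nu$-valued: higher-filtration corrections detected by Toda brackets and secondary Steenrod operations can enter, and at later stages the new cell attaches simultaneously to several lower cells, mixing contributions. A cleaner reformulation, which I would pursue in parallel, is to set up the Atiyah--Hirzebruch spectral sequence for $\pi^0(\C P^9/\C P^3)$ and check that every differential $d_r$ for $r\in\{2,4,6,8,10\}$ targeting the corner $E_r^{18,-18}=\pi_{18}^s$ vanishes. Here $d_2$ vanishes because $Sq^2:H^{16}(\C P^9;\Z/2)\to H^{18}(\C P^9;\Z/2)$ is zero; the higher differentials correspond to exactly the Toda-bracket-like products encountered in the iterative approach, and their vanishing follows from the known multiplicative structure of $\pi_\ast^s$ in low dimensions.
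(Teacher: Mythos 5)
Your overall strategy coincides with the paper's: peel the cells of $D(\C P^9/\C P^3)$ off one at a time starting from the bottom cell $S^{-18}$, read off the attaching maps from $Sq^2$ and $Sq^4$, and check at each stage that the image of the attaching sphere's homotopy group in $\pi_0$ misses the image of $\pi_{18}^s$. The first three stages of your induction go through exactly as you describe; note also that since $\pi_{18}^s$ is a $2$-group you may localize at $2$ from the outset, so $\pi_{13}^s$ vanishes and no prime-$3$ bookkeeping is needed at the $S^{-13}$ stage.

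The gap is at the last two stages (the cells attached along $S^{-11}$ and $S^{-9}$), which is where essentially all the content of the proposition lives, and it is twofold. First, your claim that the bottom-cell component of $\alpha_k\circ\beta$ ``expresses $x$ as an iterated $\nu$-multiple of $\beta$'' is not correct: once the compositions of $\beta$ with the components of $\alpha_k$ onto the intermediate cells vanish, the resulting contribution to $\pi_{18}^s$ is a (matric) Toda bracket, not a product. The vanishing of the primary products (e.g.\ $h_1Ph_2=0$ and $h_2\cdot\pi_9^s=0$) is precisely what pushes the obstruction into these secondary operations rather than killing it, so one must evaluate the brackets together with their indeterminacies; the paper does this by tracking that the only route down to the $(-18)$-cell passes through the $(-14)$-cell and that the relevant component of the attaching map on that cell is zero. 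Second, the attaching-map data you need is not all visible to primary Steenrod operations: $Sq^4(x^5)=0$ and $Sq^2(x^4)=0$ only say that the corresponding components are \emph{even} multiples of $h_2$, and one must invoke Mosher's computation of the stable attaching maps of $\C P^n$ to determine which even multiple occurs ($0$ in one case, but the nonzero class $2h_2$ in the other, which is exactly what feeds the Toda bracket). The same objection applies to your Atiyah--Hirzebruch reformulation: the differentials $d_8$ and $d_{10}$ into $E^{18,-18}$ are not determined by the multiplicative structure of $\pi_*^s$ alone; they encode this secondary attaching-map information and are the same Toda brackets in different clothing. The outline is the right one, but the proof is not complete until those brackets are actually computed.
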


\begin{proof}
We compute this map using the filtration \ref{filt}. Note that the group $\pi_{18}^s$ is $2$-torsion so it suffices to work in the $2$-local category. We use the notation in \cite{Rav}, Appendix 3.3. In terms of this notation $\pi_{18}^s$ is $\Z_2\{h_1P^2h_1\} \oplus \Z_8\{h_2h_4\}$. It helps to note that the element $h_2h_4$ is indecomposable in the algebra $\pi_*^s$ of stable homotopy groups (since the element $h_4$ supports the differential $d_2(h_4)=h_0 h_3^2$ and all the hidden extensions in the range are written in Cor 4.4.50). \\

We start proceeding along the sequence \ref{filt} with the spectrum $D(\C P^9/ \C P^7)$. This fits into a cofibre 
$$S^{-17} \to S^{-18} \to D(\C P^9/\C P^7)  $$
The map $S^{-17} \to S^{-18} \in \pi_1^s = \Z/2\{h_1\}$. Whether it is non-trivial or not is determined by the action of the Steenrod operation $Sq^2$ on the cone and hence determined by the action of $Sq^2$ on $\C P^9/\C P^7$. Note from Proposition \ref{St} that $Sq^2(x^8)=0$. Therefore the map is trivial and $D(\C P^9/\C P^7) \simeq S^{-18}\vee S^{-16}$. It follows that the map from $\pi_{18}^s \to \pi_0 D(\C P^9/ \C P^7)$ is injective. \\

The next term is $D(\C P^9/ \C P^6)$. We have the cofibre sequence 
\begin{equation}\label{cof6}
S^{-15} \to D(\C P^9/ \C P^7) \to D(\C P^9/\C P^6) 
\end{equation}
The map $S^{-15} \to D(\C P^9/\C P^7) \simeq S^{-18}\vee S^{-16}$ is an element of $\pi_1^s \oplus \pi_3^s = \Z/2\{h_1\}\oplus \Z/2\{h_2\}$. Note that on $\C P^9/\C P^6$ the Steenrod operations satisfy the formulas $Sq^2(x^7)=x^8$ and $Sq^4(x^7)=x^9$. Thus the $16$-cell in $\C P^9/\C P^6$ attaches onto the $14$-cell by $h_1$ and the $18$-cell attaches via $h_2$ onto the $14$-cell (or some other odd multiple which does not change the computations below). Therefore the map $S^{-14} \to S^{-16} \vee S^{-18}$  is given by $(h_1,h_2)$. On $\pi_0$ we have the sequence 
$$\pi_{15}^s \stackrel{(h_1,h_2)}{ \to} \pi_{16}^s \oplus \pi_{18}^s \to \pi_0 D(\C P^9/\C P^6)$$ 
Observe that multiplication by $h_2$ from $\pi_{15}^s$ to $\pi_{18}^s$ is $0$.  It follows that the map from $\pi_{18}^s \to \pi_0 D(\C P^9/ \C P^6)$ is injective. \\

The next term in the sequence \ref{filt} is $D(\C P^9/ \C P^5)$ and it is formed from $D(\C P^9/ \C P^6)$ by the cofibre
$$S^{-13} \to D(\C P^9/ \C P^6) \to D(\C P^9/\C P^5)  $$
The group $\pi_{13}^s = 0$ and so the map from $\pi_{18}^s \to \pi_0 D(\C P^9/ \C P^5)$ is injective. \\

Next we analyse the cofibre 
$$S^{-11} \to D(\C P^9/ \C P^5) \to D(\C P^9/\C P^4)  $$
and compute the image $\pi_{11}^s = \Z/8\{Ph_2\} \to \pi_0 D(\C P^9/ \C P^5)$. Note that the methods above imply that $D(\C P^9/ \C P^5)$ is the cofibre 
\begin{equation} \label{cof5}
S^{-13}\vee S^{-15} \stackrel{A}{\to} S^{-16}\vee S^{-18} \to  D(\C P^9/ \C P^5)
\end{equation}
where the matrix $A$ is given by 
$\begin{bmatrix}
    h_2 & h_1  \\
    0 & h_2
\end{bmatrix}$
. The map $S^{-11} \to D(\C P^9/ \C P^5)$ may be described by a map $S^{-11} \to S^{-12} \vee S^{-14}$ (hence $\in \pi_1^s \oplus \pi_3^s =\Z/2\{h_1\}\oplus \Z/8\{h_2\}$) together with a choice of null homotopy after composition by $A$ to $S^{-15}\vee S^{-17}$. The choice of null homotopy lies in $\{S^{-11}, S^{-16}\vee S^{-18}\} = \pi_5^s\oplus \pi_7^s= \Z/16 \{h_3\}$. 

The map $S^{-11} \to S^{-12}\vee S^{-14}$ is also the attaching map of the $(-10)$-cell for the spectrum $D(\C P^7/\C P^4)$. Hence one may try to compute its homotopy class via Steenrod operations. From the formulas in Proposition \ref{St}  we have $Sq^2(x^5)= x^6$ and $Sq^4(x^5)=0$. So the map onto the $(-12)$-cell is $h_1$ and the map onto the $(-14)$-cell is some even multiple of $h_2$. It follows from the formulas in (\cite{Mos68}, Proposition 5.2) that the second map is $0$. 

Now take a class $a \in \pi_{11}^s$, so that $a=$ some multiple of $Ph_2$. To compute its image onto $\pi_{12}^s \oplus \pi_{14}^s$ one has to multiply with the class $(h_1,0)$. Notice that $h_1Ph_2=0$. Therefore the image must be zero. 

Our case of interest is the image in $\pi_{18}^s$. This can be computed using Toda brackets. The image onto $\pi_{18}^s$ must be obtained by map onto the factor $S^{-14}$ which is the only factor which attaches non-trivially down to the $(-18)$-cell. However the map from $S^{-11}$ is null-homotopic on the $(-14)$-cell so this map does not hit any element of $\pi_{18}^s$. In addition the indeterminacy for the bracket is trivial. Hence the map from $\pi_{18}^s \to \pi_0 D(\C P^9/ \C P^4)$ is injective. \\

The next term in the sequence \ref{filt} is $D(\C P^9/ \C P^3)$. This fits into a cofibre 
$$S^{-9} \to D(\C P^9/ \C P^4) \to D(\C P^9/\C P^3)  $$
and we compute the image $\pi_9^s = \Z/2\{h_2^3=h_1^2h_3,h_1c_0,Ph_1\} \to \pi_0 D(\C P^9/ \C P^4)$. Compose the map $S^{-9} \to D(\C P^9/\C P^4) \to S^{-10}$ to the top cell. This can be detected by computing $Sq^2$. As $Sq^2(x^4)=0$ this map is null-homotopic. Therefore the attaching map goes down to $D(\C P^9/\C P^5)$ which we compute using the cofibre \ref{cof5}.

The map $S^{-9} \to D(\C P^9/ \C P^5)$ is given by a map $S^{-9} \to S^{-12} \vee S^{-14}$ (hence $\in \pi_3^s \oplus \pi_5^s = \Z/8\{h_2\}$) and a choice of null homotopy after composition by $A$ to $S^{-15}\vee S^{-17}$. The choice of null homotopy lies in $\{S^{-9}, S^{-16}\vee S^{-18}\} = \pi_7^s\oplus \pi_9^s= \Z/16 \{h_3\}\oplus \Z/2\{h_2^3,h_1c_0,Ph_1\}$.  We have $Sq^2(x^4)= 0$ and hence the map is of the form $2kh_2$. From the formulas in (\cite{Mos68}, Proposition 5.2) it follows that $k=1$. Now take a class $a \in \pi_{9}^s$. Note $h_2\cdot \pi_9^s =0$.  Thus the image in $\pi_{12}^s$ is $0$. 

Our interest is the image in $\pi_{18}^s$. This can again be computed using Toda brackets. The image onto $\pi_{18}^s$ must be obtained by map onto the factor $S^{-14}$ which is the only factor which attaches down to the $(-18)$-cell. But the attachment of $S^{-9}$ onto this cell is $0$ and hence the entire Toda bracket is forced to contain zero. Also the indeterminacy may be computed to be $0$. Hence the map from $\pi_{18}^s \to \pi_0 D(\C P^9/ \C P^3)$ is injective. 
\end{proof}

The above computation implies that the classes in $\pi_{18}^s$ survive in the sequence all the way upto $\pi_0 D(\C P^9/\C P^3)$. However in the next step we obtain a non-trivial kernel. We use some formulas for Toda brackets from \cite{Koch96}.
\begin{theorem}\label{main18}
The class $4h_2h_4\in \pi_{18}^s$ maps to $0$ in $\pi_0(D(\C P^9/\C P^2))$. It follows that the kernel of the map $\pi_{18}^s \to \pi_0(D\C P^9)$ is at least $\Z/2$. 
\end{theorem}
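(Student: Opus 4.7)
The plan is to extend the cell-by-cell computation of Proposition \ref{inj18} by one more step in the filtration \ref{filt}, namely the cofibre sequence
$$S^{-7} \to D(\C P^9/\C P^3) \to D(\C P^9/\C P^2).$$
Applying $\pi_0$, the kernel of the induced map $\pi_0 D(\C P^9/\C P^3) \to \pi_0 D(\C P^9/\C P^2)$ equals the image of $\pi_7^s = \Z/16\{h_3\}$ under the attaching map $\tau_{*} \colon \pi_7^s \to \pi_0 D(\C P^9/\C P^3)$. Since Proposition \ref{inj18} shows that $\pi_{18}^s \hookrightarrow \pi_0 D(\C P^9/\C P^3)$ is injective via the inclusion $\iota \colon S^{-18} \to D(\C P^9/\C P^3)$ of the bottom cell, the task reduces to exhibiting an element $\alpha \in \pi_7^s$ with $\tau_*(\alpha) = \iota_*(4 h_2 h_4)$.

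To carry this out, I would first describe $\tau$ cell by cell using $Sq^{2k}(x^m) = \binom{m}{k}\, x^{m+k}$ on $H^*(\C P^9;\Z/2)$. With $m=3$, only $Sq^2, Sq^4, Sq^6$ act non-trivially, so the primary Hopf invariants of $\tau$ onto $S^{-8}$ and $S^{-10}$ are $h_1$ and an odd multiple of $h_2$; the attachment onto $S^{-12}$ is forced to zero since $\pi_5^s = 0$; and the attachments onto $S^{-14}, S^{-16}, S^{-18}$ have trivial primary Steenrod detection and hence Adams filtration $\geq 2$. Composing $\tau$ with the generator $h_3 \in \pi_7^s$ and chasing the resulting class through the attaching structure of $D(\C P^9/\C P^3)$ established in the proof of Proposition \ref{inj18} (in particular the matrix $A$ from \ref{cof5}) produces a Toda bracket in $\pi_{18}^s$ that must then be compared with $4 h_2 h_4$.

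The crux is to evaluate this Toda bracket, essentially of the form $\langle h_3, h_1, h_2 \rangle$ up to the shifts coming from the matrix $A$ and the higher-filtration attachments. Standard $2$-primary Toda bracket identities, as collected in \cite{Koch96}, identify such brackets with a definite multiple of $\bar\nu = h_2 h_4$; the expected value is exactly $4$, with the residual ambiguity absorbed into the indeterminacy already controlled by the secondary attachments, giving $\tau_*(h_3) = \iota_*(4 h_2 h_4)$. The main obstacle is the bookkeeping around this Toda bracket indeterminacy: one must verify that the image in the bottom cell is precisely $4 h_2 h_4$ rather than $0$ or $2h_2 h_4$, and check that the contributions from the intermediate cells $S^{-14}$ and $S^{-16}$ do not perturb the final answer. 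Once $\iota_*(4 h_2 h_4) = 0$ in $\pi_0 D(\C P^9/\C P^2)$ is established, the second assertion follows because the composite $S^{-18} \to D(\C P^9/\C P^2) \to D\C P^9$ coming from \ref{filt} is the bottom-cell inclusion, so $4 h_2 h_4$ also maps to $0$ in $\pi_0 D\C P^9$; since $4 h_2 h_4$ has order $2$, the kernel contains $\Z/2$.
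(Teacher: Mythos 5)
Your overall framing coincides with the paper's: pass through the cofibre sequence $S^{-7}\to D(\C P^9/\C P^3)\to D(\C P^9/\C P^2)$, so that by Proposition \ref{inj18} it suffices to exhibit $\alpha\in\pi_7^s=\Z/16\{h_3\}$ with $\tau_*(\alpha)=\iota_*(4h_2h_4)$. But the element you choose, the generator $h_3$, cannot work. You note yourself that the component of $\tau$ on the top cell $S^{-8}$ is $h_1$ (from $Sq^2(x^3)=x^4$); since $h_1h_3=\eta\sigma\neq 0$ in $\pi_8^s$, the class $\tau_*(h_3)$ has non-zero image under the collapse $D(\C P^9/\C P^3)\to S^{-8}$, whereas $\iota_*(4h_2h_4)$ maps to $0$ there. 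The same objection applies to every odd multiple of $h_3$. The only possible candidates are the multiples of $2h_3$, i.e.\ the kernel of $h_1\cdot\colon\pi_7^s\to\pi_8^s$, and the substance of the paper's proof is precisely the verification that $\tau_*(2h_3)$ vanishes on all intermediate cells (so that it is genuinely supported on the bottom cell, via the kernel computation for $\pi_7^s\to\pi_0D(\C P^8/\C P^3)$) and equals exactly $\iota_*(4h_2h_4)$ there rather than $0$ or $2h_2h_4$.

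Relatedly, the bracket you name as the crux, $\langle h_3,h_1,h_2\rangle$, is not defined, again because $h_3h_1\neq 0$, so no standard identity can be applied to it; in any case no three-fold bracket of primary Hopf classes will by itself produce the filtration-heavy element $4h_2h_4$. The computation genuinely needs the secondary attaching maps of $D(\C P^9/\C P^2)$ that primary Steenrod operations do not detect: the $(-6)$-cell attaches to the $(-14)$-cell by $2h_3$ (Proposition 5.6 of \cite{Mos68}) and the $(-10)$-cell attaches to the $(-18)$-cell by $h_3$ (detected by $Sq^8(x^5)=x^9$). Feeding these into the factorization of $2h_3$ through $D(\C P^9/\C P^4)$ produces the two brackets $\langle h_3,4h_3,h_2\rangle$ and $\langle h_3,2h_2,h_3\rangle$, each equal to $2h_2h_4$ (the latter from \cite{Koch96}), whose sum gives the required $4h_2h_4$. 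Without replacing $h_3$ by $2h_3$, and without these specific inputs, your argument does not close. Your deduction of the second assertion from the first is fine.
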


\begin{proof}
The second statement follows from the first and the fact that $h_2h_4$ represents an element of order $8$ in $\pi_{18}^s$. Thus we need prove only the first. We have the cofibre 
$$S^{-7} \to D(\C P^9/ \C P^3) \to D(\C P^9/\C P^2)  $$
We compute the image $\pi_7^s = \Z/16\{h_3\} \to \pi_0 D(\C P^9/ \C P^3)$. Compose the map $S^{-7} \to D(\C P^9/\C P^3) \to S^{-8}$ to the top cell. This composite is $h_1$ by the formula  $Sq^2(x^3)=x^4$ in Proposition \ref{St}. The map $h_1 : \pi_7^s \to \pi_8^s$ has kernel $\Z/8\{2h_3\}$. 

First we prove that the map $\pi_7^s \to \pi_0D(\C P^7/\C P^3)$ has kernel $\Z/8\{2h_3\}$. That is, we show that $2h_3$ maps to $0$ in the latter group. Note from the proof of Proposition \ref{inj18} that none of the cells in dimension $-8$, $-10$, $-12$ attach to the $(-14)$-cell. Thus we have $D(\C P^7/ \C P^3) \simeq D(\C P^6/\C P^3) \vee S^{-14}$. 

Observe that the $(-8)$-cell of $D(\C P^7/ \C P^3)$ does not attach to the $(-10)$-cell. Together with the fact that $\pi_{12}^s$ and $\pi_{13}^s$ are $0$ we get that $\pi_0D(\C P^6/\C P^3)\to \pi_0(S^{-8}\vee S^{-10})$  is an isomorphism. Now observe that  the composite map $S^{-7}\to S^{-8}\vee S^{-10}$ is $h_1$ on the first factor and $h_2$ on the second factor. Hence the kernel of the map $\pi_7^s \to \pi_0D(\C P^6/\C P^3)$ is $\Z/8\{2h_3\}$. 

Observe that Proposition 5.6 of \cite{Mos68} implies that the $14$-cell in $\C P^7/\C P^2$ does not attach to the cells in dimension $8,10,12$ and attaches onto the $6$-cell by the map $2h_3$. Hence the map $S^{-7} \to D(\C P^6/\C P^3)\vee S^{-14} \to S^{-14}$ is given by $2h_3$. Thus the map $\pi_7^s \to \pi_{14}^s$ is multiplication by $2h_3$ which is $0$. Thus we have that the Kernel of the map $\pi_7^s \to \pi_0D(\C P^7/\C P^3)$ is $\Z/8\{2h_3\}$.

Next we extend the above result to the map $\pi_7^s \to \pi_0 D(\C P^8/ \C P^3)$. The map $2h_3 : S^0 \to S^{-7}$ factors in the diagram as below
$$\xymatrix{S^0 \ar[r]^{2h_3} \ar[rd]_{h_3}   & S^{-7} \ar[rr] & & D(\C P^8/\C P^3) \ar[r] & S^{-8} \\ 
                         &   S^{-7} \ar[u]_{2} \ar[rr]^{\alpha}        & & D(\C P^8/\C P^4) \ar[u] }$$
We prove that the composite 
$$ S^0 \stackrel{h_3}{\to} S^{-7} \stackrel{\alpha}{\to} D(\C P^8/\C P^4)$$ 
is $0$. The above argument shows that 
$$ S^0 \stackrel{h_3}{\to} S^{-7} \stackrel{\alpha}{\to} D(\C P^8/\C P^4)\to D(\C P^7/ \C P^4)\simeq D(\C P^6/\C P^4) \vee S^{-14}$$ 
is 0. It remains to compute the map onto $\pi_{16}^s = \pi_0 S^{-16}$ the bottom cell of $D(\C P^8/\C P^4)$. This may be computed via the cofibre 
$$ S^{-16} \to D(\C P^8/\C P^4)\to D(\C P^7/ \C P^4) \to S^{-17}$$
as an element of the sum of Toda brackets $\langle h_3, \alpha, D(\C P^6/\C P^4) \to S^{-15} \rangle$ and $\langle h_3, \alpha, S^{-14} \to S^{-15}\rangle$. 

We use that the attachment of the $(-6)$-cell to the $(-14)$-cell is $2h_3$ as noted above. Thus the attachment of $\alpha$ onto the $(-14)$-cell is given by  $4h_3$. Hence the latter bracket equals $\langle h_3, 2h_3, h_1\rangle = 2h_1h_4=0$ modulo trivial indeterminacy. 

The first bracket above is the $4$-fold bracket $\langle h_3, 2h_2, h_1,h_2\rangle$. The indeterminacy of this bracket lies in the three fold Toda bracket $\langle Ph_2, h_1,h_2\rangle$. Note that this bracket is in the kernel of the map $\pi_{16}^s \to \pi_0 D(\C P^8/\C P^4)$ killed by the attachment of the $(-10)$-cell. Modulo the above indeterminacy the bracket $\langle h_3, 2h_2, h_1,h_2\rangle$ is a multiple of $2$ and hence $0$. Thus, the kernel of $\pi_7^s \to \pi_0 D(\C P^8/ \C P^3)$ is $2h_3$. 

The class $2h_3$ maps to $0$ under $S^{-7} \to D(\C P^9/\C P^3) \to D(\C P^8/\C P^3)$. Thus it maps to $\pi_0S^{-18}$. We prove that it maps to the class $4h_2h_4$ under this map. We have the factorization as above
$$\xymatrix{S^0 \ar[r]^{2h_3} \ar[rd]_{h_3}   & S^{-7} \ar[rr] & & D(\C P^9/\C P^3) \ar[r] & S^{-8} \\ 
                         &   S^{-7} \ar[u]_{2} \ar[rr]^{\alpha}        & & D(\C P^9/\C P^4) \ar[u] }$$ 
We write $D(\C P^9/\C P^4)$ as the cofibre 
$$\Sigma^{-1} D(\C P^6/ \C P^4) \vee S^{-15} \to S^{-16}\vee S^{-18} \to D(\C P^9/\C P^4) \to D(\C P^6/ \C P^4) \vee S^{-14}\to  S^{-15}\vee S^{-17}$$
Hence the map onto $\pi_{18}^s$ is a sum of Toda brackets $\langle h_3, \alpha, D(\C P^6/ \C P^4) \to S^{-17}\rangle$ and $\langle h_3, \alpha, S^{-14} \to S^{-17}\rangle$. The latter map is the Toda bracket $\langle h_3, 4h_3,h_2\rangle = 2h_2h_4$  modulo trivial indeterminacy. 

For the first map note that the attaching map $D(\C P^6/\C P^4) \to S^{-17}$ restricting to the bottom cell $S^{-12}$ is trivial and on the top cell is $h_3$ as computed by the Steenrod operation $Sq^8(x^5)=x^9$. Therefore the first map is $\langle h_3, 2h_2,h_3\rangle$. This bracket is computed in (\cite{Koch96}, Page 251) as $2h_2h_4$. 

Therefore the sum of the two maps is $4h_2h_4$. It follows that the element $4h_2h_4$ maps to $0$ in $\pi_0 D(\C P^9/ \C P^2)$. This completes the proof.  
\end{proof}

It follows from the above result that the kernel $\pi_{18}^s \to \{\C P^9, S^0\}$ is at least $\Z/2$ and is a subgroup of $\Z/8$. We prove that it cannot be $\Z/8$ so that the kernel can be $\Z/2$ or $\Z/4$. 

\begin{theorem}\label{nontriv18}
The class $h_2h_4\in \pi_{18}^s$ maps to a non-trivial class in $\{\C P^9, S^0\}$.
\end{theorem}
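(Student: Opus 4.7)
The approach is to continue the cell-by-cell filtration analysis initiated in the proofs of Proposition~\ref{inj18} and Theorem~\ref{main18}, processing the two remaining cofibre sequences of the filtration~\eqref{filt}:
\[
S^{-5}\to D(\C P^9/\C P^2)\to D(\C P^9/\C P^1),\qquad S^{-3}\to D(\C P^9/\C P^1)\to D(\C P^9).
\]
Since $\pi_5^s = 0$, the first cofibre sequence contributes no further kernel, and the class $h_2 h_4\in\pi_{18}^s$ descends to a nontrivial class in $\pi_0 D(\C P^9/\C P^1)$ (still having $4h_2h_4$ in the kernel and nothing more, by Theorem~\ref{main18}). The essential task is therefore to compute the image of the connecting map
\[
\pi_3^s=\Z/24 \longrightarrow \pi_0 D(\C P^9/\C P^1)
\]
induced by the attaching map $\gamma\colon S^{-3}\to D(\C P^9/\C P^1)$ of the $-2$-cell, and to show that the class of $h_2 h_4$ (viewed inside $\pi_0 D(\C P^9/\C P^1)$ via the inclusion of the bottom cell $S^{-18}$) does not lie in this image.

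At the prime $2$ the only contribution to $\pi_3^s$ is the $\Z/8$-summand generated by $h_2$; at odd primes, dimensional considerations preclude any interaction with the $2$-torsion class $h_2h_4$. The attaching map $\gamma$ is analyzed by projecting onto the cellular summands of $D(\C P^9/\C P^1)$ that arise from~\eqref{filt}: by Proposition~\ref{St}, $Sq^2(x)=x^2$ forces the projection onto the top cell $S^{-4}$ to be $h_1$, while the projections onto lower cells (and the secondary attaching data between them) are controlled by the higher Steenrod operations on $H^*(\C P^9)$ together with the attaching map formulas for $\C P^n$ from~\cite{Mos68}. Composing $h_2$ with these components and following the resulting map down through the cofibre cascade of the filtration presents the $\pi_{18}^s$-component of $\gamma_*(h_2)$ as a multi-fold Toda bracket of the form $\langle h_2,\beta_1,\beta_2,\ldots\rangle$, with entries among known elements of $\pi_*^s$.

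The main obstacle will be evaluating this Toda bracket and controlling its indeterminacy. Using the bracket tables of~\cite{Koch96} in the relevant stems, combined with the vanishing relations $h_2\cdot \pi_5^s = 0$ and $h_2\cdot \pi_9^s = 0$ that were already exploited in the proofs of Proposition~\ref{inj18} and Theorem~\ref{main18}, one checks that the value of the bracket (mod its indeterminacy) lies inside the subgroup $\{0, 4h_2h_4\}\subset\pi_{18}^s$. Since Theorem~\ref{main18} has already identified $4h_2h_4$ as lying in the kernel of $\pi_{18}^s\to \{\C P^9, S^0\}$, this additional contribution from $\pi_3^s$ cannot kill $h_2h_4$ itself, which therefore maps to a nontrivial class in $\{\C P^9, S^0\}$.
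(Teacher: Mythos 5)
Your overall strategy --- push $h_2h_4$ through the last two stages of the filtration \ref{filt} and show that the connecting map from $\pi_3^s$ cannot kill it --- is a genuinely different route from the paper's, which abandons the filtration at this point entirely. The paper instead detects $h_2h_4$ unstably: it desuspends $\nu^*=h_2h_4$ to $\nu^*_{16}\in\pi_{34}S^{16}$, invokes Toda's computation $H(\nu^*_{16})=\nu_{31}$ of the Hopf invariant, and observes that the resulting class survives in $\{\C P^9,S^{15}\}$ because $h_2\in\pi_{-15}S^{-18}$ maps nontrivially to $\pi_{-15}D\C P^9$ (only $2h_2$ is killed, by the attaching map $(h_2,h_1)$ of the $(-15)$-cell); a final comparison of suspension kernels transfers this to $\{\C P^9,S^0\}$. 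Your proposal, by contrast, has a genuine gap at exactly the step where all the content lies: you never evaluate the multi-fold Toda bracket, only asserting that ``one checks'' its value modulo indeterminacy lies in $\{0,4h_2h_4\}$. There is a structural reason to doubt that this check can succeed in the form you describe: any Toda bracket with first entry $h_2$ landing in $\pi_{18}^s$ has indeterminacy containing $h_2\cdot\pi_{15}^s$, and $h_2h_4$ itself lies in $h_2\cdot\pi_{15}^s$. Hence ``the bracket lies in $\{0,4h_2h_4\}$ modulo its indeterminacy'' cannot distinguish the case where $h_2h_4$ is in the image of $\gamma_*$ (and hence dies in $\pi_0D(\C P^9)$) from the case where it is not --- the indeterminacy contains precisely the element whose survival you are trying to establish. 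To salvage the approach you would need to compute the subgroup $\mathrm{im}(\gamma_*)\cap\mathrm{im}\bigl(\pi_{18}^s\to\pi_0D(\C P^9/\C P^1)\bigr)$ on the nose, with the indeterminacy absorbed by the cell structure of $D(\C P^9/\C P^1)$ as the paper does at earlier stages, and you give no indication of how to do that here.

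A secondary point: you assert that the kernel of $\pi_{18}^s\to\pi_0D(\C P^9/\C P^2)$ is ``$4h_2h_4$ and nothing more, by Theorem \ref{main18}.'' The statement of that theorem only gives the inclusion $\Z/2\{4h_2h_4\}\subseteq\ker$; the equality does follow, but only from the finer information inside its proof (the kernel of $\pi_7^s\to\pi_0D(\C P^8/\C P^3)$ is exactly $\Z/8\{2h_3\}$, and $2h_3$ lifts to $4h_2h_4$, which has order $2$), so this step needs to be justified rather than cited. Your handling of the $k=2$ stage (using $\pi_5^s=0$) and the odd-primary reduction are correct.
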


\begin{proof}
We use some computations from \cite{Tod}. Recall that the element $h_2h_4$ is denoted by $\nu^\ast$. For such a stable class the sphere of origin is the first sphere where this class desuspends to. For the class $\nu^*$ the sphere of origin is $S^{12}$ and it desuspends to the class $\xi_{12} : S^{30}\to S^{12}$. The class $\nu^*$ also desuspends to $\nu^*_{16}: S^{34}\to S^{16}$. 

From Lemma 12.14 of \cite{Tod} we know that $H(\nu_{16}^*) = \nu_{31} (\mathit{mod}~2\nu_{31})$. The latter maps isomorphically to the stable range and is equivalent to the map (an odd multiple of) $h_2 \in \pi_3^s$. We have the commutative diagram 
\begin{equation}\label{diag16}
\xymatrix{ [S^{34}, S^{16}] \ar[r]^{H_\#} \ar[d]   & [S^{34}, S^{31}] \ar[d] \ar[r]^{\cong} & \{S^{18}, S^{15}\} \ar[d]  \\
[\Sigma^{16} \C P^9, S^{16}]\ar[r]^{H_\#}     & [\Sigma^{16} \C P^9 , S^{31}] \ar[r]^{\cong} & \{ \C P^9, S^{15}\} } 
\end{equation} 
In terms of the Spanier-Whitehead duality the last map is induced by $\pi_{-15}S^{-18} \to \pi_{-15} D\C P^9$ from the inclusion of the bottom cell. Note that the map $D(\C P^9/\C P^6) \to D\C P^9$ is an isomorphism on $\pi_{-15}$. From the cofibre sequence \ref{cof6} we have the exact sequence 
$$ \cdots \pi_{-15}S^{-15} \to \pi_{-15} S^{-18} \oplus \pi_{-15}S^{-16} \to \pi_{-15} D(\C P^9/\C P^6)\cdots $$
The left map sends $1\in \Z$ to $(h_2,h_1) \in \pi_{-15} S^{-18} \oplus \pi_{-15}S^{-16}$. It follows that $2h_2$ maps to $0$ in $\pi_{-15} D(\C P^9/\C P^6)$ and the class $h_2$ maps non-trivially. Hence in the diagram \ref{diag16} the element $\nu_{16}^*$ in the top left corner maps to a non-trivial element in the bottom right group $\{ \C P^9, S^{15}\}$. Therefore it maps non-trivially in $[\Sigma^{16} \C P^9, S^{16}]$. 

Next we show that this implies that the image in $\{ \C P^9, S^0\}$ is non-trivial. We have the diagram 
$$\xymatrix{
[\Sigma^{16} \C P^9/ \C P^7, S^{16}]  \ar[r] \ar[d]     &   \{ \C P^9/\C P^7, S^0\} \ar[d] \\
[\Sigma^{16} \C P^9, S^{16}]               \ar[r] \ar[d]     &    \{ \C P^9, S^0\}   \ar[d]         \\
[\Sigma^{16} \C P^7, S^{16}]            \ar[r]^{\cong}   &   \{ \C P^9, S^0\}         }$$

Note that the vertical arrows are exact. It follows that the kernel $[\Sigma^{16} \C P^9, S^{16}]      \to  \{ \C P^9, S^0\}$ is isomorphic to the kernel $[\Sigma^{16} \C P^9/ \C P^7, S^{16}]\to \{ \C P^9/\C P^7, S^0\}$. We have observed in the proof of Theorem \ref{inj18} that $\C P^9/\C P^7 \simeq S^{16} \vee S^{18}$. Hence $[\Sigma^{16} \C P^9/ \C P^7, S^{16}]\cong [S^{32},S^{16}] \oplus [S^{34},S^{16}]$ so that the above kernel is direct sum 
$$\mathit{Ker}( [S^{32},S^{16}] \to \{S^{16},S^0\})  \oplus \mathit{Ker}( [S^{34},S^{16}] \to \{S^{18},S^0\})$$
The element $\nu_{16}^\ast \in \pi_{34}S^{16}$ maps to $\nu^* \in \pi_{18}^s$ and so does not lie in the kernel above. It follows that the image of $\nu_{16}^*$ maps non-trivially to $\{\C P^9,S^0\}$. Thus the element $h_2h_4$ is mapped non-trivially in $\{\C P^9,S^0\}$.
\end{proof}

Summarizing the computations in Proposition \ref{inj18}, Theorem \ref{main18}, Theorem \ref{nontriv18}, we have the following result for the Problem \ref{main}.
\begin{cor}\label{ker9}
The kernel $\Ker(f_{\C P^9}^\ast)\subset \pi_{18}^s = \Z/2 \oplus \Z/8$ is non-trivial but not the entire group $\Z/8$. It is either $\Z/2$ or $\Z/4$ as a subgroup of $\Z/8$.  
\end{cor}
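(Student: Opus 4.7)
The corollary is essentially an assembly of the three preceding results, together with the Farrell--Jones computation recalled in the introduction. Write $\pi_{18}^s = \Z/2\{\mu_{18}\}\oplus \Z/8\{h_2h_4\}$ and set $K=\ker(f_{\C P^9}^\ast)$. Theorem \ref{main18} gives $4h_2h_4\in K$, so $K$ contains the order-$2$ subgroup $\langle 4h_2h_4\rangle$ and is in particular non-trivial. Theorem \ref{nontriv18} shows $h_2h_4\notin K$, so $K$ is not the full $\Z/8$ summand.

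To complete the argument I would verify that $K$ is actually contained in the $\Z/8$ summand. For this I would invoke the Farrell--Jones $KO^\ast$-theoretic argument mentioned in the introduction: it produces a homomorphism out of $\{\C P^9,S^0\}$ whose composition with $f_{\C P^9}^\ast:\pi_{18}^s\to \{\C P^9,S^0\}$ recovers the Atiyah $\alpha$-invariant on $\pi_{18}^s$. Since $\alpha(\mu_{18})\neq 0$ while $\alpha(h_2h_4)=\alpha(\nu^\ast)=0$, any class of the form $\mu_{18}+j\,h_2h_4$ maps to the non-zero class under this composite and therefore cannot lie in $K$. Hence $K\subseteq \Z/8\{h_2h_4\}$, and the three constraints $\langle 4h_2h_4\rangle\subseteq K\subseteq \Z/8\{h_2h_4\}$ together with $h_2h_4\notin K$ leave only the possibilities $K\cong\Z/2$ or $K\cong\Z/4$ as subgroups of $\Z/8$.

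The step that this plan leaves unresolved, and the main obstacle to a sharper statement, is deciding between these two possibilities, i.e.\ whether $2h_2h_4$ lies in $K$. Settling this would require extending the Toda-bracket and cell-attaching analysis of Theorem \ref{main18} through the next stage of the filtration (\ref{filt}), namely computing the image of $\pi_{18}^s$ in $\pi_0 D(\C P^9/\C P^1)$ and $\pi_0 D(\C P^9)$ itself, and carefully controlling the associated indeterminacies; alternatively one could look for a stable cohomology-theoretic invariant on $\{\C P^9,S^0\}$ that separates $2h_2h_4$ from $4h_2h_4$.
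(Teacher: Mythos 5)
Your proposal is correct and follows essentially the same route as the paper, which simply assembles Theorem \ref{main18} (giving $4h_2h_4$ in the kernel), Theorem \ref{nontriv18} (excluding $h_2h_4$), and the Farrell--Jones $KO$-theoretic detection of $\mu_{18}$ to conclude. Your explicit check that classes of the form $\mu_{18}+j\,h_2h_4$ cannot lie in the kernel---using that the $\alpha$-invariant vanishes on $\Z/8\{h_2h_4\}$ but not on $\mu_{18}$---is a point the paper passes over silently, and your closing remark correctly identifies that deciding between $\Z/2$ and $\Z/4$ (i.e.\ whether $2h_2h_4$ survives) is exactly what is left open.
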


\subsection{Computations in higher dimensions}\label{comph}

We follow up the computations in Section \ref{comp18} by demonstrating that there are many examples where the inclusion of the bottom cell in $D(\C P^{8n+2})$ carries a non-trivial kernel in $\pi_0$. The methods here are easier involving computations of Steenrod operations and the existence of certain stable homotopy classes. 

The next example after $18$ is $26$. Recall from \cite{Rav} that the group $\pi_{26}^s\cong \Z/2\{\mu_{26}\}\oplus \Z/2\{h_2^2g\}\oplus \Z/3\{\beta_2\}$.  We know that the class $\mu_{26}$ survives to $\C P^{13}$ from \cite{Far-Jon}. We have the following result

\begin{theorem}\label{main26}
The class $h_2^2g$ maps to $0$ in $\pi_0D(\C P^{13}/\C P^{11})$. It follows that the kernel of the map $\pi_{26}^s \to \pi_0(D\C P^{13})$ is at least $\Z/2$. 
\end{theorem}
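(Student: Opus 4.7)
The approach mirrors Theorem \ref{main18} but is much simpler, in line with the opening remark of Section \ref{comph} that the present methods combine Steenrod operations with the existence of certain stable homotopy classes. The idea is to exhibit $h_2^2 g$ as the image, under the attaching map of a suitable low-lying cell in the dualized filtration, of a known class in $\pi_{23}^s$, so that exactness forces $h_2^2 g$ into the kernel.

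First, I would pin down the initial stages of the filtration $D(\C P^{13}/\C P^k)$. By Proposition \ref{St}, $Sq^2(x^{12}) = 0$ since $12$ is even, so the cofibre map $S^{-25} \to S^{-26}$ is null and $D(\C P^{13}/\C P^{11}) \simeq S^{-26} \vee S^{-24}$. Moving one further step down, since $11$ is odd and $11 \equiv 3 \pmod 4$, we have $Sq^2(x^{11}) = x^{12}$ and $Sq^4(x^{11}) = x^{13}$, so the next attaching map takes the form $\phi = (h_1,h_2) = (\eta,\nu) : S^{-23} \to S^{-24}\vee S^{-26}$, modulo even multiples of $h_2$ that do not affect the argument below.

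Next, I would invoke the known class $h_2 g \in \pi_{23}^s$ (sometimes denoted $\nu\bar\kappa$), which is a non-trivial permanent cycle in the Adams spectral sequence and whose product with $\eta$ vanishes in homotopy because $\eta\nu = h_1 h_2 = 0$. Applying the attaching map $\phi$ to this class yields
$$(\eta\cdot h_2 g,\ \nu\cdot h_2 g)\ =\ (0,\ h_2^2 g)\ \in\ \pi_{24}^s\oplus\pi_{26}^s.$$
Exactness of the induced sequence on $\pi_0$ then places $h_2^2 g$ in the image of $\phi_\ast$, and so $h_2^2 g$ is killed upon inclusion into the appropriate next stage of the filtration. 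The second assertion, about the kernel of $\pi_{26}^s \to \pi_0 D\C P^{13}$, then follows by functoriality of the filtration, since $h_2^2 g$ generates a $\Z/2$ summand of $\pi_{26}^s$.

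The main obstacle is the input from stable homotopy: one must cite that $h_2 g$ is a genuine non-trivial homotopy class in $\pi_{23}^s$, not merely a non-zero element of the $E_2$-page, and that multiplication by $\eta$ annihilates it with no hidden extension intervening. These facts are available from Ravenel's tables (Appendix 3), but they are the one non-elementary ingredient; once accepted, the cell-structural portion of the argument reduces to the two Steenrod operation computations above.
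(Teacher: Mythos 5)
Your proposal is correct and follows essentially the same route as the paper: split off $D(\C P^{13}/\C P^{11})\simeq S^{-26}\vee S^{-24}$ using $Sq^2(x^{12})=0$, identify the next attaching map as $(h_1,h_2)$ from $Sq^2(x^{11})=x^{12}$ and $Sq^4(x^{11})=x^{13}$, and observe that $h_2g\in\pi_{23}^s$ maps to $(0,h_2^2g)$, which kills $h_2^2g$ at the next stage of the filtration and hence in $\pi_0 D\C P^{13}$. Your extra justification that $h_1\cdot h_2g=0$ because $\eta\nu=0$ is a slight refinement of the paper's bare assertion, but the argument is the same.
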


\begin{proof}
As in Section \ref{comp18}, we have the filtration 
$$
S^{-26}= D(\C P^{13}/\C P^{12}) \to D(\C P^{13}/\C P^{11}) \cdots D(\C P^{13}/\C P^0)= D(\C P^{13})
$$
and the map $S^{-26} \to D(\C P^{13})$ is the inclusion of the bottom cell. We show that $h_2^2g$ is in the kernel. It suffices to work $2$-locally. We have the cofibre 
$$S^{-25} \to S^{-26} \to D(\C P^{13}/\C P^{11})  $$
The map $S^{-25} \to S^{-26}$ is an element of $\pi_1^s = \Z/2\{h_1\}$ and since $Sq^2(x^{12})=0$ in the cohomology of $\C P^n$,  the map is trivial. It follows that $D(\C P^{13}/\C P^{11}) \simeq S^{-26}\vee S^{-24}$. Next we have the cofibre 
$$S^{-23} \to D(\C P^{13}/ \C P^{11}) \to D(\C P^{13}/\C P^{10})  $$
The map $S^{-23} \to D(\C P^{13}/\C P^{11})$ is given by a pair of maps to $ S^{-26}$ and $S^{-24}$. We note the Steenrod squares $Sq^2(x^{11})=x^{12}$ and $Sq^4(x^{11})=x^{13}$ in the cohomology of $\C P^n$. Therefore the map is given by $(h_1,h_2)$. On $\pi_0$ we have the sequence 
$$\pi_{23}^s \stackrel{(h_1,h_2)}{ \to} \pi_{24}^s \oplus \pi_{26}^s \to \pi_0 D(\C P^{13}/\C P^{10})$$ 
Now note that $h_2g$ represents a non-trivial element in $\pi_{23}^s$ and $h_1h_2g=0$. Thus $h_2^2g$ is in the image of the left hand map and hence maps to $0$ in $ \pi_0 D(\C P^{13}/ \C P^{10})$. Therefore $h_2^2g$ maps to $0$ in $\pi_0D\C P^{13}$ proving the theorem. 
\end{proof}

We have the corresponding result for the Problem \ref{main}.
\begin{cor}\label{ker26}
The kernel $\Ker(f_{\C P^{13}}^\ast)\subset \pi_{26}^s = \Z/2 \oplus \Z/2 \oplus \Z/3$ is non-trivial and contains a summand $\Z/2$.
\end{cor}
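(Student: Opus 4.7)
The plan is to deduce Corollary \ref{ker26} directly from Theorem \ref{main26} together with the Spanier-Whitehead duality identification of $f_{\C P^{13}}^\ast$ already set up in Section \ref{comp18}. The structural content has been done in the theorem; the corollary is essentially a bookkeeping statement phrasing the theorem's conclusion in the language of Problem \ref{main}.

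First, I would recall the dictionary established just after Problem \ref{main}: for $M = \C P^{4n+1}$ with $n \ge 1$, the map $f_{M}^\ast : \tilde{\pi}^0(S^{8n+2}) \to \tilde{\pi}^0(M)$ is, up to Spanier-Whitehead duality, the map on $\pi_0$ induced by the inclusion of the bottom cell $S^{-(8n+2)} \to D(M)$. Specialized to $n = 3$, this identifies $\Ker(f_{\C P^{13}}^\ast)$ with the kernel of $\pi_{26}^s = \pi_0 S^{-26} \to \pi_0 D(\C P^{13})$.

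Next I would quote the decomposition $\pi_{26}^s \cong \Z/2\{\mu_{26}\} \oplus \Z/2\{h_2^2 g\} \oplus \Z/3\{\beta_2\}$ recorded from \cite{Rav} at the start of Section \ref{comph}. To show the kernel contains a $\Z/2$ summand, it suffices to exhibit one of the order-two generators in the kernel. Theorem \ref{main26} shows that $h_2^2 g$ maps to $0$ already in $\pi_0 D(\C P^{13}/\C P^{11})$, and since this group maps to $\pi_0 D(\C P^{13})$ through the filtration, the class $h_2^2 g$ lies in the kernel of $\pi_{26}^s \to \pi_0 D(\C P^{13})$. Because $h_2^2 g$ generates one of the two $\Z/2$ summands, the subgroup it generates is a direct summand of $\pi_{26}^s$, which gives the $\Z/2$ summand of the kernel.

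Finally, to verify that the kernel is a proper subgroup (so that the statement "non-trivial" is meaningful as phrased, but more importantly that we actually have a genuine obstruction), I would note that $\mu_{26}$ does not lie in the kernel: this is precisely the content of the Farrell-Jones result \cite{Far-Jon} cited earlier. I anticipate no real obstacle here; the only mild care needed is to confirm that $\Z/2\{h_2^2 g\}$ really is a direct summand, which is immediate from the direct sum decomposition of $\pi_{26}^s$. With these observations, the corollary follows.
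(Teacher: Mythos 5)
Your proposal is correct and follows exactly the route the paper intends: the corollary is stated as an immediate consequence of Theorem \ref{main26} via the Spanier--Whitehead duality dictionary from Section \ref{comp18}, with the class $h_2^2g$ generating the $\Z/2$ summand of the kernel. Your additional observation that $\mu_{26}$ survives (by \cite{Far-Jon}) is not needed for the statement as phrased, but it is consistent with the paper's discussion.
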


Next we show that techniques as in Theorem \ref{main26} exist in many high dimensions. More precisely, we demonstrate examples in higher dimensions where the map $S^{-8n-2} \to D(\C P^{8n+2})$ has a non-trivial kernel on $\pi_0$ using some $p$-local computations. We use the result from \cite{CNL96} :  For $p\geq 7$ the classes $\alpha_1\beta_1^r\gamma_t$ are non trivial in the stable homotopy groups of $S^0$ for $2\leq t \leq p-1$ and $r\leq p-2$ (in dimension $n(t,p,r)= [2(tp^3 - t - p^2) +2r(p^2 - 1 - p) -2]$).  With these assumptions $\beta_1^r\gamma_t$ is also non-trivial in dimension $n(t,p,r)-(2p -3)$. 

We note the proposition 

\begin{prop}
In $H^*(\C P^n; \Z/p)$ $P^1(x^k) \neq 0$ if and only if $p$ does not divide $k$. 
\end{prop}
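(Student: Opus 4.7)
The plan is to compute $P^1(x^k)$ directly via the Cartan formula, which will yield the explicit formula $P^1(x^k) = k\,x^{k+p-1}$, immediately giving the stated equivalence.

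First, recall that $H^*(\C P^n;\Z/p) \cong \Z/p[x]/(x^{n+1})$ with $|x|=2$. Since $P^1$ raises cohomological degree by $2(p-1)$, the class $P^1(x^k)$ lies in degree $2(k+p-1)$, hence is a $\Z/p$-multiple of $x^{k+p-1}$ (understood to vanish if $k+p-1>n$, so I will assume $k+p-1\leq n$; the boundary case is subsumed by the statement since it forces $P^1(x^k)=0$ on degree grounds regardless of $k \bmod p$).

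Step one: evaluate the base case $P^1(x)=x^p$. This is the unstable axiom $P^i(y) = y^p$ whenever $|y|=2i$, applied with $y=x$ and $i=1$. Step two: argue by induction on $k$ using the Cartan formula
\[
P^1(x^k) \;=\; P^1(x \cdot x^{k-1}) \;=\; P^1(x)\,x^{k-1} + x\,P^1(x^{k-1}),
\]
(no middle terms appear since $P^0=\mathrm{id}$ and $P^j$ for $0<j<1$ is absent). The induction hypothesis $P^1(x^{k-1}) = (k-1)x^{k+p-2}$ combined with $P^1(x)=x^p$ yields $P^1(x^k) = x^{k+p-1} + (k-1)x^{k+p-1} = k\,x^{k+p-1}$.

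Step three: since $x^{k+p-1}$ is a basis element of $H^{2(k+p-1)}(\C P^n;\Z/p)$ whenever $k+p-1\leq n$, the class $k\,x^{k+p-1}$ is nonzero if and only if $k\not\equiv 0 \pmod p$. There is essentially no obstacle here; the only point requiring mild care is the truncation by $x^{n+1}$, handled by the assumption above.
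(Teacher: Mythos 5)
Your proof is correct: the paper states this proposition without proof, and your derivation of $P^1(x^k)=k\,x^{k+p-1}$ from the unstable axiom $P^1(x)=x^p$ (for $|x|=2$) and the Cartan formula is the standard argument that the authors are implicitly relying on. Your caveat about the truncation $x^{n+1}=0$ is also the right one to flag, since the paper only ever applies the proposition in $\C P^N$ for $N\gg 0$, where the degree-range issue does not arise.
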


In the next theorem note that if $(p-1)(t-r) + r \equiv 3~(\mathit{mod}~4)$, $n(t,p,r) \equiv 2(\mathit{mod}~8)$. 
\begin{theorem}\label{mainhigh}
Suppose that $p$ is a prime $\geq 7$, $2\leq t \leq p-1$ and $r\leq p-2$. Assume that $p$ does not divide $t+r$. Then the map $\pi_{n(t,p,r)}^s \to \pi_0 D(\C P^\frac{n(t,p,r)}{2})$ has non-trivial $p$-torsion in the kernel. 
\end{theorem}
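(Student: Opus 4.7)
The plan is to mimic the $2$-local argument of Theorem \ref{main26} at the prime $p$, using the class $\alpha_1 \in \pi^s_{2p-3}$ (detected by the Steenrod operation $P^1$) in place of $h_1 = \eta$. Write $N = n(t,p,r)$ and $m = N/2$. All spectra from here on are implicitly localised at $p$. I consider the filtration
\[
S^{-2m} = D(\C P^m/\C P^{m-1}) \to D(\C P^m/\C P^{m-2}) \to \cdots \to D(\C P^m/\C P^{m-p}) \to \cdots \to D(\C P^m)
\]
with cofibres $S^{-2k-1} \to D(\C P^m/\C P^k) \to D(\C P^m/\C P^{k-1})$, and aim to show that $\alpha_1 \beta_1^r \gamma_t \in \pi^s_{N} = \pi_0 S^{-2m}$ already dies in $\pi_0 D(\C P^m/\C P^{m-p})$, hence in $\pi_0 D(\C P^m)$.

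First I would establish the $p$-local splitting
\[
D(\C P^m/\C P^{m-p+1})_{(p)} \simeq \bigvee_{j=0}^{p-2} S^{-2m+2j}.
\]
This is a descending induction on $k$ from $m-1$ to $m-p+2$: at each stage, the component of the newly introduced attaching map onto any existing summand lies in a stem $\pi^s_{2j-2k-1}$ with $0 < 2j-2k-1 < 2p-3$, and such stems vanish at $p$ since the first positive $p$-local stable stem sits in degree $2p-3$. Next I would analyse the attaching map $S^{-2m+2p-3} \to \bigvee_{j=0}^{p-2} S^{-2m+2j}$ that adjoins the $-2(m-p+1)$-cell. Components into $S^{-2m+2j}$ for $1 \leq j \leq p-2$ lie in $\pi^s_{2p-3-2j}$, which is trivial at $p$. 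The component into the bottom summand $S^{-2m}$ lies in $\pi^s_{2p-3} \cong \Z/p\{\alpha_1\}$ and is determined by $P^1$ on the corresponding cohomology classes: the proposition preceding the theorem gives $P^1(x^{m-p+1}) = (m-p+1)\,x^m \equiv (m+1)\,x^m \pmod p$, and a direct computation from the formula for $N$ yields $m+1 \equiv -(t+r) \pmod p$, which is non-zero by hypothesis. Hence this component is $u\alpha_1$ for some unit $u \in \Z/p$.

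To finish I would run the $\pi_0$ exact sequence of the cofibre $S^{-2m+2p-3} \to D(\C P^m/\C P^{m-p+1}) \to D(\C P^m/\C P^{m-p})$: the boundary sends $\beta \in \pi^s_{2m-2p+3}$ to $(u\,\alpha_1\beta,\,0,\ldots,0)$ inside $\pi^s_{2m} \oplus \pi^s_{2m-2} \oplus \cdots \oplus \pi^s_{2m-2(p-2)}$. Choosing $\beta = \beta_1^r\gamma_t$, which is non-trivial by \cite{CNL96} and sits in the required degree $N - (2p-3) = 2m-2p+3$, shows that $u\,\alpha_1\beta_1^r\gamma_t$, and therefore $\alpha_1\beta_1^r\gamma_t$ itself, maps to zero in $\pi_0 D(\C P^m/\C P^{m-p})$ and hence in $\pi_0 D(\C P^m)$. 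Since $\alpha_1\beta_1^r\gamma_t$ is a non-trivial $p$-torsion element of $\pi^s_N$ by \cite{CNL96}, this produces the required non-trivial $p$-torsion kernel.

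The main obstacle is verifying the $p$-local wedge splitting together with the exact multiple of $\alpha_1$ arising in the attaching map. The sparseness of $p$-local stable stems below degree $2p-3$ eliminates the Toda-bracket book-keeping that dominated the $2$-local proofs in Section \ref{comp18}; once the $P^1$-coefficient is matched against the hypothesis $p\nmid(t+r)$ the argument closes cleanly.
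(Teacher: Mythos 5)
Your proposal is correct and follows essentially the same route as the paper's proof: work $p$-locally, use the sparseness of $p$-local stable stems below degree $2p-3$ to split $D(\C P^m/\C P^{m-p+1})$ as a wedge of spheres, identify the bottom-cell component of the next attaching map as a unit multiple of $\alpha_1$ via $P^1$ and the hypothesis $p\nmid(t+r)$, and then kill $\alpha_1\beta_1^r\gamma_t$ in the long exact sequence using the non-triviality of $\beta_1^r\gamma_t$ from \cite{CNL96}. The only difference is that you spell out the wedge splitting and the congruence $m+1\equiv -(t+r)\pmod p$ explicitly, which the paper leaves implicit.
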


\begin{proof}
Note that the condition $p$ does not divide $t+r$ implies that $n(t,p,r)- 2(p-1)$ is not divisible by $p$. We work $p$-locally. The first non-trivial element in $\pi_*^s \otimes \Z_{(p)}$ in positive dimension is $\alpha_1$ in dimension $2p-3$ and this is detected by the Steenrod operation $P^1$. 

Constructing $D(\C P^\frac{n(t,p,r)}{2})$ cell by cell as above in the $p$-local category, the first possible non-trivial attaching map is the map $S^{-n(t,p,r) +2p-3} \to  D(\C P^\frac{n(t,p,r)}{2}/\C P^\frac{n(t,p,r) - 2(p-1)}{2})$. The assumption $p$ does not divide $t+r$ implies that in $\C P^N$ for $N\gg 0$
$$P^1(x^\frac{n(t,p,r)-2(p-1)}{2}) = x^\frac{n(t,p,r)}{2}$$
Therefore the map $S^{-n(t,p,r) +2p-3} \to  D(\C P^\frac{n(t,p,r)}{2}/\C P^\frac{n(t,p,r) - 2(p-1)}{2})$ is given by $\alpha_1$ on the bottom cell. Now in the long exact sequence of homotopy groups
$$\cdots \to \pi_{n(t,p,r)-(2p-3)}^s \to \pi_0 D(\C P^\frac{n(t,p,r)}{2}/\C P^\frac{n(t,p,r) - 2(p-1)}{2}) \to \pi_0 D(\C P^\frac{n(t,p,r)}{2}/\C P^\frac{n(t,p,r) - 2(p-1)}{2})\to \cdots $$
the non-trivial element $\beta_1^r\gamma_t$ maps to the non-trivial element $\alpha_1 \beta_1^r\gamma_t$.  It follows that the non-trivial element $\alpha_1\beta_1^r\gamma_t \in \pi_{n(t,p,r)}^s $ goes to $0$ in $\pi_0 D(\C P^\frac{n(t,p,r)}{2})$. This completes the proof. 
\end{proof}

\begin{remark}
Note that the above conditions are easily satisfied. For example, if $t=3,r=1$, $p\geq 7$ we have $n(t,p,r)= 6p^3 - 2p -10$ which is $\equiv 2 (\mathit{mod}~8)$.
\end{remark}
The following result is immediate consequence of stringing together Theorem \ref{first}, Theorem \ref{coniner}, Corollary \ref{ker9}, Corollary \ref{ker26} and Theorem \ref{mainhigh}.

\begin{theorem}\label{inersumm}

(i) $I(\mathbb{C}P^{9})=\mathbb{Z}_2$ or $\mathbb{Z}_4$.\\
(ii) $I(\mathbb{C}P^{13})\supseteq \Z/2$.\\
(iii) Suppose that $p$ is a prime $\geq 7$, $2\leq t \leq p-1$ and $r\leq p-2$. Assume that $p$ does not divide $t+r$ and $(p-1)(t-r) + r \equiv 3~(\mathit{mod}~4)$ . Then $I(\C P^\frac{n(t,p,r)}{2})\supseteq \Z/p$.

\end{theorem}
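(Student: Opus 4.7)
The plan is to assemble the previously established results; the statement is purely a repackaging of Corollary~\ref{ker9}, Corollary~\ref{ker26}, and Theorem~\ref{mainhigh}, converting kernels of the cohomotopy map $f_{\C P^{4n+1}}^\ast$ into statements about the inertia group. No new computation is required.

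First I would invoke Theorem~\ref{first} to replace $I(\C P^{4n+1})$ by the concordance inertia group $I_c(\C P^{4n+1})$. Theorem~\ref{coniner} then identifies this with the kernel of the composition $\Phi = f_{\C P^{4n+1}}^\ast \circ \phi_*^{-1} \circ \psi_*$. Since $\psi_* : \Theta_{8n+2} \to \pi_{8n+2}(G/O)$ and $\phi_* : \pi_{8n+2}^s \to \pi_{8n+2}(G/O)$ are both isomorphisms for $n \geq 1$ (as recalled in the paragraph preceding Theorem~\ref{coniner}), the composite $\phi_*^{-1}\circ\psi_* : \Theta_{8n+2} \xrightarrow{\cong} \pi_{8n+2}^s$ is an isomorphism, so under this identification $I(\C P^{4n+1}) = \ker\bigl(f_{\C P^{4n+1}}^\ast : \pi_{8n+2}^s \to \pi_0 D\C P^{4n+1}\bigr)$.

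Parts (i) and (ii) then follow by direct substitution: for (i), take $n = 2$ so that the ambient sphere dimension is $18$ and apply Corollary~\ref{ker9} to get $I(\C P^9) = \Z/2$ or $\Z/4$; for (ii), take $n = 3$ so that the dimension is $26$ and apply Corollary~\ref{ker26} to get the $\Z/2$ containment in $I(\C P^{13})$. For (iii), the only arithmetic point to verify is that the hypothesis $(p-1)(t-r) + r \equiv 3 \pmod 4$ forces $n(t,p,r) \equiv 2 \pmod 8$, as asserted in the remark preceding Theorem~\ref{inersumm}; consequently $n(t,p,r)/2 = 4n + 1$ for some integer $n \geq 1$, and Theorem~\ref{mainhigh} supplies the $\Z/p$ subgroup of the kernel, hence of $I(\C P^{n(t,p,r)/2})$.

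There is no conceptual obstacle; the only care needed is book-keeping to ensure that the dimension-and-parity hypotheses in (iii) place $\C P^{n(t,p,r)/2}$ in the regime $m = 4n+1$ with $n \geq 1$, where both Theorem~\ref{coniner} and the Kervaire--Milnor identification $\Theta_{8n+2} \cong \pi_{8n+2}^s$ apply. Once this is checked, the three statements fall out by direct substitution into the three kernel computations.
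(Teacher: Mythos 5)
Your proposal is correct and matches the paper's own argument, which states that the theorem is an immediate consequence of stringing together Theorem~\ref{first}, Theorem~\ref{coniner}, Corollary~\ref{ker9}, Corollary~\ref{ker26} and Theorem~\ref{mainhigh}; you have simply made the identifications (via the isomorphisms $\psi_*$ and $\phi_*$) and the mod-$8$ arithmetic for part (iii) explicit. No gaps.
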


\section{Geometric Structures of Inequivalent Smooth Structures}
In this section we explore some consequences of the computations in the previous section. It is known that inequivalent smoothings need not share certain basic geometric
properties with the standard smooth structure. Hitchin has shown in \cite{Hit74} that certain homotopy spheres do not admit a Riemannian metric of positive scalar curvature, while the round metric on the standard $S^m$ has positive sectional curvature. In this section, we also show that there exist two smooth structures on $\mathbb{C} P^{9}$ such that one admits a metric of nonnegative scalar curvature and the other does not. 

 Let $\Ker(d_{\mathbb{R}})$ denotes the kernel of the Adams $d$-invariant $d_{\mathbb{R}} : \pi_{8n+2}^{s}\to \mathbb{Z}/2$ (\cite{Ada}). Under the isomorphism $\Theta_{8n+2}\cong \pi_{8n+2}^{s}$, the Adams $d$-invariant $d_{\mathbb{R}} : \pi_{8n+2}^{s}\to \mathbb{Z}/2$ may be identified with the $\alpha$-invariant homomorphism $\alpha:\Theta_{8n+2}\to \mathbb{Z}/2$ (\cite{Hit74}).  Therefore $\rm{Ker}(d_{\mathbb{R}})$ consists of homotopy $(8n+2)$-sphere which bound spin manifolds.

 In \cite{Ram14}, we studied the Adams $d$-invariant and asked the following question to determine the inertia group $I(\mathbb{C} P^{4n+1})$.
\begin{question}\label{raquestion}
 Let $f:\mathbb{C} P^{4n+1}\to S^{8n+2}$ be any degree one map $(n\geq 1)$.\\
 Does there exist an element $\eta\in \Ker(d_{\mathbb{R}})\subset \pi^s_{8n+2}=\Theta_{8n+2}$ such that the following is true :
 \begin{itemize}
\item [$(\star)$] If any map $h:S^{q+8n+2}\to S^{q}$ represents $\eta$, then $$h\circ \Sigma^{q}f : \Sigma^{q}\mathbb{C} P^{4n+1}\to S^{q}$$ is not null homotopic.
 \end{itemize}
\end{question}
The following theorem shows that the answer to the above question is yes for $n=2$, where the Adams $d$-invariant $d_{\mathbb{R}} : \pi_{18}^{s}=\Z/2\{h_1P^2h_1\} \oplus \Z/8\{h_2h_4\}\to \mathbb{Z}/2$ such that $\Ker(d_{\mathbb{R}})=\Z_8\{h_2h_4\}$ (\cite{Ada}). From Theorem \ref{inersumm} we readily deduce
\begin{prop}
$I(\mathbb{C} P^{9})$ is properly contained in $\Ker(d_{\mathbb{R}})=\mathbb{Z}/8$.
\end{prop}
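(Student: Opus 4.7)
\medskip

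\noindent\textbf{Proof plan.} The claim combines two pieces: inclusion $I(\mathbb{C}P^9)\subseteq \Ker(d_{\mathbb{R}})$, and the inequality $I(\mathbb{C}P^9)\neq \Ker(d_{\mathbb{R}})$. The second part is already essentially done: Theorem \ref{inersumm}(i) says $I(\mathbb{C}P^9)\in\{\mathbb{Z}/2,\mathbb{Z}/4\}$, both of which are proper subgroups of $\mathbb{Z}/8$. So all the real work is to establish the inclusion $I(\mathbb{C}P^9)\subseteq \Ker(d_{\mathbb{R}})$.

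For the inclusion, I would argue as follows. Under the identification $\Theta_{18}\cong \pi_{18}^s = \mathbb{Z}/2\{h_1P^2h_1\}\oplus \mathbb{Z}/8\{h_2h_4\}$, the Adams $d$-invariant $d_{\mathbb{R}}:\pi_{18}^s\to\mathbb{Z}/2$ has kernel exactly the $\mathbb{Z}/8$ summand generated by $h_2h_4$; the complementary $\mathbb{Z}/2$ summand is generated by $\mu_{18}=h_1P^2h_1$. So to show $I(\mathbb{C}P^9)\subseteq \Ker(d_{\mathbb{R}})$ it suffices to prove that the class $\mu_{18}$ does not lie in $I(\mathbb{C}P^9)$. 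This is precisely the Farrell--Jones result recalled in the introduction: $\mathbb{C}P^{9}\#\Sigma^{18}$ is not concordant to $\mathbb{C}P^{9}$ when $\Sigma^{18}$ is the homotopy sphere corresponding to $\mu_{18}$, so $\mu_{18}\notin I_c(\mathbb{C}P^9)$. By Theorem \ref{first}, $I_c(\mathbb{C}P^9)=I(\mathbb{C}P^9)$, so $\mu_{18}\notin I(\mathbb{C}P^9)$.

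Combining: any element of $I(\mathbb{C}P^9)\subset \pi_{18}^s$, written as $a\,\mu_{18}+b\,(h_2h_4)$, must have $a=0$ (else subtracting a multiple of $h_2h_4$ from $I(\mathbb{C}P^9)\cap \mathbb{Z}/8$ would realize $\mu_{18}$ as a sum of inertia elements, contradicting the above), hence lies in the $\mathbb{Z}/8$ summand $\Ker(d_{\mathbb{R}})$. Thus $I(\mathbb{C}P^9)\subseteq \Ker(d_{\mathbb{R}})=\mathbb{Z}/8$, and by Theorem \ref{inersumm}(i) the containment is proper.

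\noindent\textbf{Main obstacle.} There is essentially no genuine obstacle at this stage, since all the heavy lifting has been done upstream: the stable-homotopy computation bounding $I(\mathbb{C}P^9)$ above by $\mathbb{Z}/4$ (Corollary \ref{ker9} and Theorem \ref{inersumm}), the identification of $\Ker(d_{\mathbb{R}})$ with the $\mathbb{Z}/8$ summand (Adams), and the Farrell--Jones concordance obstruction for $\mu_{18}$. The only point to be careful about is the bookkeeping that lets one conclude the \emph{inclusion} $I(\mathbb{C}P^9)\subseteq\Ker(d_{\mathbb{R}})$ rather than merely the numerical inequality of orders; this is handled by the splitting $\pi_{18}^s=\mathbb{Z}/2\{\mu_{18}\}\oplus\mathbb{Z}/8\{h_2h_4\}$ together with the observation that $\mu_{18}$ itself is excluded from the inertia group.
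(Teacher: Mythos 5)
Your overall structure matches the paper's: properness is immediate from Theorem \ref{inersumm}(i) (equivalently Corollary \ref{ker9}), and the real content is the inclusion $I(\mathbb{C}P^9)\subseteq\Ker(d_{\mathbb{R}})$. But the bookkeeping step you yourself flag as ``the only point to be careful about'' is exactly where your argument breaks. From the two facts you invoke --- that $\mu_{18}\notin I(\mathbb{C}P^9)$ and that $I(\mathbb{C}P^9)$ is a subgroup --- one cannot conclude that every element $a\,\mu_{18}+b\,h_2h_4$ of $I(\mathbb{C}P^9)$ has $a=0$. Your parenthetical tacitly assumes that the component $b\,h_2h_4$ already lies in $I(\mathbb{C}P^9)\cap\mathbb{Z}/8$, which is only known when $4\mid b$ (Theorem \ref{main18} gives $4h_2h_4$, and Theorem \ref{nontriv18} rules out $h_2h_4$). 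Concretely, the subgroup of $\mathbb{Z}/2\{\mu_{18}\}\oplus\mathbb{Z}/8\{h_2h_4\}$ generated by $\mu_{18}+2h_2h_4$ is cyclic of order $4$, contains $4h_2h_4$, and contains neither $\mu_{18}$ nor $2h_2h_4$; so a subgroup can avoid $\mu_{18}$ without being contained in the $\mathbb{Z}/8$ summand. Excluding the single class $\mu_{18}$ is therefore not enough.

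The repair is to use that $d_{\mathbb{R}}$ is a \emph{homomorphism} that vanishes on all of $I(\mathbb{C}P^9)$, not merely that one class is excluded. Either of two equivalent routes works: (1) the Farrell--Jones $KO^*$ argument shows that any $\Sigma^{18}$ with $d_{\mathbb{R}}(\Sigma)=\alpha(\Sigma)\neq 0$ has $\mathbb{C}P^{9}\#\Sigma$ not concordant to $\mathbb{C}P^{9}$, and this applies to every class $a\,\mu_{18}+b\,h_2h_4$ with $a\neq 0$ since $d_{\mathbb{R}}$ kills the $\mathbb{Z}/8$ summand; or (2) more directly, the $\alpha$-invariant is additive under connected sum and $\alpha(\mathbb{C}P^{9})=0$ (it is spin with positive scalar curvature), so $\Sigma\in I(\mathbb{C}P^{9})$ forces $\alpha(\Sigma)=\alpha(\mathbb{C}P^{9}\#\Sigma)-\alpha(\mathbb{C}P^{9})=0$, i.e.\ $\Sigma\in\Ker(d_{\mathbb{R}})$. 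This is the mechanism the paper relies on (it reappears explicitly in the proof of Theorem \ref{exocom}). With that substitution your proof is correct and coincides with the paper's deduction from Theorem \ref{inersumm}.
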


Picking up an element in $\Ker(d_\R)$ which is not in $I(\C P^9)$, we have a class $\{S^{18},S^0\}$ which maps non-trivially in $\{ \C P^9, S^0\}$. Thus for every representative $S^{q+18}\to S^q$ the corresponding map $\Sigma^q \C P^9 \to S^q$ is not nullhomotopic. This answers the Question \ref{raquestion} in the case $n=2$. 

The existence of classes outside the inertia group for $\C P^9$ also has the following consequence
\begin{theorem}\label{exocom}
There exist three homotopy $18$-spheres $\Sigma^{18}_1$, $\Sigma^{18}_2$ and $\Sigma^{18}_3$ such that the following is true.
\begin{itemize}
\item[\rm{(i)}] The manifolds $\mathbb{C}P^{9}$, $\mathbb{C}P^{9}\#\Sigma^{18}_1$, $\mathbb{C} P^{9}\#\Sigma^{18}_2$ and $\mathbb{C}P^{9}\#\Sigma^{18}_3$ are pairwise non-diffeomorphic.
\item[\rm{(ii)}] The manifolds $\mathbb{C}P^{9}\#\Sigma^{18}_2$ and $\mathbb{C} P^{9}\#\Sigma^{18}_3$ do not admit a metric of nonnegative scalar curvature but $\mathbb{C} P^{9}\#\Sigma^{18}_1$ does.
\end{itemize}
\end{theorem}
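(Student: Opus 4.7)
The group $\Theta_{18}$ splits as $\Z/2\oplus\Z/8$, where the $\Z/2$-summand is generated by a Hitchin sphere $\Sigma_H$ with $\alpha(\Sigma_H)=1$ and the $\Z/8$-summand coincides with $\Ker(\alpha)=\Ker(d_\R)$. By the preceding proposition, $I(\C P^9)$ is a proper subgroup of this $\Z/8$, so I may choose a homotopy $18$-sphere $\Sigma_1^{18}\in\Ker(\alpha)\setminus I(\C P^9)$. I then set $\Sigma_2^{18}:=\Sigma_H$ and $\Sigma_3^{18}:=\Sigma_1^{18}\#\Sigma_H$.

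For part (i), I use the standard fact that $\C P^9\#\Sigma\cong\C P^9\#\Sigma'$ if and only if $\Sigma-\Sigma'\in I(\C P^9)$. Since $\Sigma_H$ has order two, the pairwise differences among $\{0,\Sigma_1,\Sigma_2,\Sigma_3\}$ are, up to sign, $\Sigma_1$, $\Sigma_H$, and $\Sigma_1\#\Sigma_H$. The first lies outside $I(\C P^9)$ by the choice of $\Sigma_1$; the other two have $\alpha$-invariant $1$ and therefore lie outside $\Ker(\alpha)\supseteq I(\C P^9)$. This yields the pairwise non-diffeomorphism claimed.

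For part (ii), the existence assertion is the easier half: since $\alpha(\Sigma_1)=0$, Stolz's theorem produces a metric of positive scalar curvature on $\Sigma_1$, and combining it with the Fubini-Study metric on $\C P^9$ via the Gromov-Lawson surgery theorem (a connected sum in dimension $18$ is a codimension-$18$ surgery on spin manifolds, well within the codimension-$3$ requirement) gives a positive, hence nonnegative, scalar curvature metric on $\C P^9\#\Sigma_1$. The obstruction half is subtler: for $j=2,3$ the manifold $\C P^9\#\Sigma_j$ is a simply connected spin $18$-manifold with $\alpha(\C P^9\#\Sigma_j)=\alpha(\Sigma_j)=1$, because $\alpha$ is additive under connected sum and vanishes on $\C P^9$ (which already has positive scalar curvature). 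Hitchin's strengthening of the Lichnerowicz argument then rules out a positive scalar curvature metric; Kazdan-Warner's scalar-curvature trichotomy reduces the nonnegative case to a scalar-flat, and in fact Ricci-flat, metric, in which the Lichnerowicz formula forces the existence of a parallel spinor. The Berger/Wang classification of simply connected Riemannian manifolds admitting parallel spinors, specialised to real dimension $18$, is incompatible with the rational cohomology ring of $\C P^9$, giving the contradiction. The main obstacle of the argument is this last scalar-flat step; the rest is a direct assembly of standard ingredients from smoothing theory and the theory of positive scalar curvature.
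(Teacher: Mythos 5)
Your proposal is correct and follows essentially the same route as the paper: the same three spheres up to relabelling (the paper takes $\Sigma_1$ to be the generator $h_2h_4$ of $\Ker(d_\R)=\Z/8$, $\Sigma_2=\mu_{18}$, $\Sigma_3=\Sigma_1\#\Sigma_2$), the same difference-in-$I(\C P^9)$ argument for (i), and the same $\alpha$-invariant/Kazdan--Warner/parallel-spinor/holonomy argument plus Gromov--Lawson for (ii). The only cosmetic deviation is that you exclude $\Sigma_2,\Sigma_3$ from $I(\C P^9)$ via $I(\C P^9)\subseteq\Ker(\alpha)$ rather than by the direct stable-homotopy computation, which is equally valid given Corollary \ref{ker9}.
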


\begin{proof}
We start with the first statement. Let $\Sigma^{18}_{1}$, $\Sigma^{18}_2$ and $\Sigma^{18}_3$ be homotopy spheres in $\Theta_{18}\cong \pi_{18}^s=\Z_2\{h_1P^2h_1\} \oplus \Z_8\{h_2h_4\}$ represented by the classes $h_2h_4$, $h_1P^2h_1$ and $h_1P^2h_1+h_2h_4$ respectively. Note that $\Sigma^{18}_{i}\notin I(\mathbb{C} P^{9})$ by Theorem \ref{nontriv18}, where $i=1$, $2$ or $3$, and $\Sigma^{18}_1\in \Ker(d_{\mathbb{R}})$ but $\Sigma^{18}_2$, $\Sigma^{18}_3\notin \Ker(d_{\mathbb{R}})$. If $\mathbb{C} P^{9}\#\Sigma^{18}_{i}$ is diffeomorphic to $\mathbb{C} P^{9}\#\Sigma^{18}_{j}$, then $\Sigma^{18}_{i}\#(\Sigma^{18}_{j})^{-1}\in I(\mathbb{C}P^{9})$. But, $\Sigma^{18}_{i}\#(\Sigma^{18}_{j})^{-1}=\Sigma^{18}_{k}~ or~ (\Sigma^{18}_{k})^{-1}$, where $k\neq i$, $j$. This implies that the manifolds $\mathbb{C} P^{9}$, $\mathbb{C} P^{9}\#\Sigma^{18}_{1}$, $\mathbb{C} P^{9}\#\Sigma^{18}_{2}$ and $\mathbb{C} P^{9}\#\Sigma^{18}_{3}$ are pairwise non-diffeomorphic. This proves (i).

Now we turn to (ii). Since $\Sigma^{18}_i\notin \rm{Ker}(d_{\mathbb{R}})$, where $i=2$ or $3$, and $\mathbb{C}P^{9}$ is a spin manifold equipped with its natural metric (the
Fubini study metric) of positive scalar curvature, the $\alpha$-invariant $\alpha(\mathbb{C}P^{9})=0$ and $\alpha(\Sigma^{18}_i)\neq 0$ (\cite{Hit74}).
 Therefore the $\alpha$-invariant $$\alpha(\mathbb{C}P^{9}\#\Sigma^{18}_i)=\alpha(\mathbb{C}P^{9})+\alpha(\Sigma^{18}_i)\neq 0.$$
 We now proceed by contradiction. Suppose there exists a nonnegative scalar curvature Riemannian metric $g$ on $\mathbb{C}P^{9}\#\Sigma^{18}_i$. Since the non-vanishing of the $\alpha$-invariant and the well-known deformation properties of scalar curvature (\cite{KW75}) now imply that the Riemannian manifold $(\mathbb{C}P^{9}\#\Sigma^{18}_i, g)$ must be scalar-flat and $\mathbb{C}P^{9}\#\Sigma^{18}_i$ has a non-trivial parallel spinor. Therefore the Riemannian manifold $(\mathbb{C}P^{9}\#\Sigma^{18}_i, g)$ has special holonomy (\cite{Hit74}). This is a contradiction, since $\mathbb{C}P^{9}\#\Sigma^{18}_i$ have generic holonomy. Hence $\mathbb{C}P^{9}\#\Sigma^{18}_2$ and $\mathbb{C}P^{9}\#\Sigma^{18}_3$ do not admit a metric of nonnegative scalar curvature. Now consider the connected sum $\mathbb{C}P^{9}\#\Sigma^{18}_1$ and by using the fact $\alpha(\Sigma^{18}_1)=0$, it follows that $\mathbb{C}P^{9}\#\Sigma^{18}_1$ admits a metric of positive scalar curvature (\cite{GL80}). This proves (ii).
\end{proof}

Another application is the following result which is an immediate  consequence of Theorem \ref{exocom} and Theorem \cite[Theorem 1.4]{Ram14}.
\begin{theorem} \label{exocomhyp}
Let $\Sigma^{18}_1$, $\Sigma^{18}_2$ and $\Sigma^{18}_3$ be the specific homotopy $18$-spheres  posited in Theorem \ref{exocom}.
Given a positive real number $\epsilon$, there exists closed complex hyperbolic manifold $M^{18}$ of complex dimension $9$  such that the following is true.
\begin{enumerate}
\item[\rm{(i)}] The manifolds $M^{18}$, $M^{18}\#\Sigma^{18}_1$, $M^{18}\#\Sigma^{18}_2$ and $M^{18}\#\Sigma^{18}_3$ are pairwise non-diffeomorphic.
\item[\rm{(ii)}] Each of the manifolds $M^{18}\#\Sigma^{18}_1$, $M^{18}\#\Sigma^{18}_2$ and $M^{18}\#\Sigma^{18}_3$ supports a negatively curved Riemannian metric whose sectional curvatures all lie in the closed interval $[-4 -\epsilon, -1 +\epsilon]$.
\end{enumerate}
\end{theorem}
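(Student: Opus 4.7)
The proof plan is to combine Theorem \ref{exocom} directly with Theorem 1.4 of \cite{Ram14}. The cited theorem is the Farrell--Jones style smoothing result: it takes as input an exotic sphere $\Sigma \in \Theta_{2n}$ with $\Sigma \notin I(\C P^n)$ and a positive real $\epsilon$, and produces a closed complex hyperbolic $2n$-manifold $M$ such that $\Sigma \notin I(M)$ and $M \# \Sigma$ carries a Riemannian metric whose sectional curvatures lie in $[-4 - \epsilon, -1 + \epsilon]$. The three homotopy spheres $\Sigma^{18}_1, \Sigma^{18}_2, \Sigma^{18}_3$ furnished by Theorem \ref{exocom} are precisely chosen to lie outside $I(\C P^9)$ (this is built into the proof of Theorem \ref{exocom}(i) via Theorem \ref{nontriv18}), so they are admissible inputs.

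First I would fix $\epsilon > 0$ and apply the cited smoothing theorem to obtain a single closed complex hyperbolic $M^{18}$ of complex dimension $9$ with the property that none of the exotic spheres $\Sigma^{18}_i$ lies in $I(M^{18})$, and for which each connect sum $M^{18} \# \Sigma^{18}_i$ carries a negatively curved Riemannian metric with sectional curvatures in the closed interval $[-4-\epsilon, -1+\epsilon]$. This is precisely assertion (ii), so part (ii) reduces immediately to the cited theorem.

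For assertion (i), I would repeat verbatim the group-theoretic argument used in the proof of Theorem \ref{exocom}(i). If there were a diffeomorphism $M^{18} \# \Sigma^{18}_i \cong M^{18} \# \Sigma^{18}_j$ for some $i \neq j$, then the element $\Sigma^{18}_i \# (\Sigma^{18}_j)^{-1}$ would lie in $I(M^{18})$. By the explicit description of the $\Sigma^{18}_i$ as the three non-zero classes in the subgroup $\langle h_1 P^2 h_1, h_2 h_4\rangle$ of $\Theta_{18}$ of order $2$ generated modulo the subgroup $\langle 2 h_2 h_4\rangle$, this difference equals $\Sigma^{18}_k$ or $(\Sigma^{18}_k)^{-1}$ for the remaining index $k$, contradicting $\Sigma^{18}_k \notin I(M^{18})$. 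A similar argument using $\Sigma^{18}_i \notin I(M^{18})$ distinguishes $M^{18}$ itself from each $M^{18} \# \Sigma^{18}_i$.

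The main obstacle is handling the three spheres and their pairwise differences simultaneously with a single manifold $M^{18}$. Theorem 1.4 of \cite{Ram14} is naturally stated for one sphere at a time, so one must verify that the Farrell--Jones construction is flexible enough to produce a common $M^{18}$ whose inertia group avoids the entire finite list $\{\Sigma^{18}_i, (\Sigma^{18}_i)^{-1}\}_{i=1,2,3}$. In practice this is achieved by observing that the underlying Farrell--Jones obstruction is inherited by sufficiently large finite covers of any fixed complex hyperbolic $18$-manifold, so passing to a common cover (and shrinking $\epsilon$ accordingly to preserve the pinching) yields the single $M^{18}$ that the theorem requires.
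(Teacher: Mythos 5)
Your proposal matches the paper exactly: the paper offers no written proof at all, declaring the theorem an ``immediate consequence'' of Theorem \ref{exocom} and \cite[Theorem 1.4]{Ram14}, which are precisely the two ingredients you combine. Your additional care about securing a single complex hyperbolic $M^{18}$ that simultaneously detects all three spheres and their pairwise differences (via a common finite cover) addresses a point the paper silently elides, and is the correct way to justify the word ``immediate.''
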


\mbox{ }\\

\mbox{ }\\

\end{document}